\newtheorem{thm}{Theorem}[section]
\newtheorem{dfn}[thm]{Definition}
\newtheorem{lma}[thm]{Lemma}
\newtheorem{remark}{Remark}
\def\Kz{\ensuremath{\mathcal{K}_0}}
\def\Ko{\ensuremath{\mathcal{K}_1}}
\newcommand{\cL}{L^2(\Sigma)}
\newcommand{\cH}{H^{-\frac{1}{2},-\frac{1}{4}}(\Sigma)}
\newcommand{\cX}{\mathcal X}
\newcommand{\IR}{\mathbb{R}}
\newcommand{\cT}{\mathcal{T}}
\journal{Computers \& Mathematics with Applications}
\begin{document}

\begin{frontmatter}



 \title{Sparse Grid Approximation Spaces for Space-Time Boundary Integral Formulations of the Heat Equation}

 \author[label1]{Alexey Chernov}
 \author[label2]{Anne Reinarz}
 
 \address[label1]{Institut f\"ur Mathematik, 
Carl von Ossietzky Universit\"at Oldenburg, DE, alexey.chernov@uni-oldenburg.de}
 \address[label2]{Institut f\"ur Informatik, Technical University of Munich,  \underline{reinarz@in.tum.de}}

\begin{abstract}
The aim of this paper is to develop stable and accurate numerical schemes for
boundary integral formulations of the heat equation with Dirichlet 
 boundary conditions.
The accuracy of Galerkin discretisations for the resulting boundary integral formulations depends mainly on the choice of
discretisation space. We develop a-priori error analysis utilising a proof technique that involves
norm equivalences in hierarchical wavelet subspace decompositions.
We apply this to a full tensor product discretisation, showing improvements over existing results, 
particularly for discretisation spaces having low polynomial degrees. We then use the norm equivalences
to show that an anisotropic sparse grid discretisation yields even higher convergence rates. 
Finally, a simple adaptive scheme is proposed
to suggest an optimal shape for the sparse grid index sets.
\end{abstract}

\begin{keyword}
Boundary Element Methods \sep Space-Time Approximation \sep Parabolic Problems \sep
Sparse Grids \sep Adaptive Sparse Grids



\end{keyword}

\end{frontmatter}







 
 \section{Introduction}
 Solutions to the heat equation are needed in many applications in physics, engineering 
 and financial modeling \cite{wayland}, \cite{wilmott}. 
The primary application in three dimensions is modeling heat flow in an
isotropic medium. 
Higher dimensional applications appear in, e.g. the valuation of financial derivatives or in
image analysis and machine learning \cite{witkin}.
 Since the analytic solutions to these  problems are typically not known, efficient 
numerical methods for solving them are important.

Standard methods for solving parabolic boundary value problems, such as finite element methods,
approximate the solution using a variational formulation on a subdivision of the domain $\Omega$
combined with a low-order time stepping scheme \cite{thomee}. 
Efficient variants of this finite element approach using various space-time sparse grid discretisations
 can be found in e.g. \cite{oeltz}.
 In complicated spatial domains, or for unbounded domains,
 generating a mesh of the domain $\Omega$ can be extremely challenging. In contrast, the boundary
 integral formulation of the problem only requires a mesh of the boundary $\Gamma=\partial\Omega$.
In many applications only the boundary values of the solution or  its derivatives
 are the quantities of interest. This data can be obtained directly as a solution of the boundary integral formulation.

After the boundary reduction the resulting Galerkin boundary integral formulation of the heat equation
becomes coercive \cite{costabelheat}. Hence it
remains stable for any conforming space-time discretisation. In particular, it remains stable for
arbitrary choices of mesh size versus time step size. 
For problems with inhomogeneous stationary boundary conditions, this allows us to solve the equation much more
efficiently, as a small number of time steps is sufficient. 
In order to allow easy error analysis of these methods for
 different choices of discrete approximation spaces we restrict ourselves to
 a Galerkin discretisation of the problem.
 
 The main aim of this paper is to show that the boundary integral formulation provides a stable
 method to solve the heat equation for various choices of discretisation spaces. We will supplement
 the obtained theoretical bounds by numerical experiments. 
 The computational gains of the boundary integral formulation are easily lost with a naive
 implementation of the method, largely due to the resultant full stiffness matrices and singular
 integrals.  In a subsequent paper we will expand on how to resolve these implementational issues.
 
 The paper is organised as follows.
 In Section \ref{sec-1} we formulate the heat equation, introduce the boundary reduction and
 the anisotropic Sobolev spaces that the boundary integral operators are posed in. 
 
 In Section \ref{sec-fulltp} we will show that the classical convergence 
  results from \cite{costabelheat}
  can be improved for particular choices of polynomial degrees by using a wavelet decomposition
  of the discrete spaces. More precisely, this error analysis yields an improvement for 
  low polynomial degrees. These low order polynomials are of interest as they can be easily 
  implemented, even when using wavelet bases.
 
 In Section \ref{sec-sg}  we  improve on the error analysis of the  sparse grid
 approximations to this problem given in \cite{chernovschwabheat}
 and \cite{chernovschwabstoch} and verify those results numerically.
 We show that the combination technique (\cite{zengergriebel}, \cite{garckegriebel})
 gives an efficient and easily implemented approximation of the sparse grid space for this problem.
 
 Further, we show examples of index sets produced by a simple adaptive algorithm. 
 The adaptive implementation suggests that the approximation spaces constructed 
 in \cite{griebelknapek} give higher convergence rates for these problems
 if an appropriate anisotropic scaling  in time and space is chosen.

 
 \section{Problem Formulation}\label{sec-1}
 We start by formulating the domain heat equation.
Let $\Omega \subset \IR^d$, $d\geq 2$ be a bounded domain with a smooth 
boundary $\Gamma := \partial \Omega$. 
The extension of the following theory to non-smooth, e.g. polygonal domains, is possible 
\cite{noondiss}, \cite{costabelheat}. However, for simplicity we
restrict ourselves to smooth domains $\Gamma \subset C^\infty$. Further, let $n$ be the outer normal 
vector field of $\Gamma$.

With $T>0$ we denote a finite time horizon and with $\mathcal{I}:=(0,T)$ the time interval of interest.
 The domain heat equation is defined
on the space-time cylinder $Q:=\Omega\times \mathcal{I}$.

\begin{dfn}
 Given boundary data $g:\Gamma\times\mathcal{I}\rightarrow\IR$ 
 find the solution $u:Q\rightarrow \IR$ satisfying:
\begin{equation}\label{1-dir}
 \begin{split}
  (\partial_t-\Delta)u &= 0,~~~~~~~~~~\text{in }Q\\
  u &= 0, ~~~~~~~~~~\text{at }\Omega\times \{t=0\}\\
  Tu &= g, ~~~~~~~~~~\text{in }\Gamma\times\mathcal{I},
 \end{split}
\end{equation}
where $T$ is the trace operator $\gamma_0u = u|_\Gamma$ in the case of Dirichlet boundary conditions, or
the normal derivative of the solution on the boundary $\gamma_1u=\partial_nu|_\Gamma$
in the case of Neumann boundary conditions.
\end{dfn}

\subsection{Boundary reduction}
The boundary element method relies on finding a formulation of the problem 
(\ref{1-dir}) which is posed on the mantle of the space-time cylinder $\Omega\times\mathcal{I}$.
For this we require the fundamental solution of the heat equation,
which is
\begin{equation}\label{eq-fundamental}
 G(x,t) = \begin{cases}
(4\pi t)^{-d/2}e^{-|x|^2/4t} & t\geq0\\0&t<0,
\end{cases}
\end{equation}
for any spatial dimension $d\geq 1$ (see \cite{costabelheat}).

Then we can apply Green's second theorem to problem (\ref{1-dir}) with either
Dirichlet or Neumann boundary conditions, giving the following
representation of the solution of the heat equation
\begin{equation}\label{eq-green}
\begin{split}
 u(x,t) = \int_0^T\int_{\partial\Omega} &\left[ G(x-y,t-s)\frac{\partial }{\partial n_y} u(y,s) 
- \frac{\partial}{\partial n_y}G(x-y,t-s)u(y,s)\right]dyds,
\end{split}
\end{equation}
where $n_y$ is outward unit normal to $\partial \Omega$ at the point $y$.

The boundary element method then consists of finding the missing boundary data in
(\ref{eq-green}). That is, either the boundary flux
$\gamma_1 u$
for the Dirichlet problem
or the boundary values $\gamma_0u$ for the Neumann problem.
This formulation can be simplified using the following operators:
\begin{dfn}
 The single layer heat potential is defined as
\begin{displaymath}
  \Kz(\varphi)(x,t) := \int_\Sigma \varphi(y,s)G(x-y,t-s)dyds~~~~~~~~~(x,t)\in Q
\end{displaymath}

 The double layer heat potential is defined as
\begin{displaymath}
  \Ko(\psi)(x,t) :=    \int_\Sigma \psi(y,s)\frac{\partial}{\partial n_y}G(x-y,t-s)dyds~~~~~~~~~(x,t)\in Q.
\end{displaymath}
\end{dfn}

Using the trace operators $\gamma_0$ and $\gamma_1$,  the representation formula (\ref{eq-green}) can now be written
more concisely as 
\begin{equation}\label{rep-formula}
 u = \Kz(\gamma_1 u) - \Ko(\gamma_0 u),~~~~~~\text{in Q}.
\end{equation}

 Let $\varphi\in H^{\frac{1}{2},\frac{1}{4}}(\Sigma)$ and $\psi\in H^{-\frac{1}{2},-\frac{1}{4}}(\Sigma)$.
 \begin{dfn}
 The single layer operator $V$ is defined as
\begin{equation}
 V\psi:=\gamma_0 \Kz\psi.
\end{equation}
 and the double layer operator $K$ is defined as
\begin{equation}
K\varphi:=\gamma_0\left(\Ko\varphi\right)|_Q+\frac{1}{2}\varphi.
\end{equation}
\end{dfn}
 
An extensive analysis of these operators, including results on their well-posedness and
regularity can be found in \cite{costabelheat} and \cite{noondiss}. 
Using these operators we can formulate
two methods to find solutions of the boundary integral formulation of the heat equation.
For this we first need to introduce the anisotropic Sobolev spaces these methods are posed in. 
Let $r,s \geq 0$
then
\begin{equation*}
\begin{split}
H^{r,s}(\IR^d \times \IR) &:= L^2(\IR; H^r(\IR^d)) \cap H^s(\IR; L^2(\IR^d)),
\end{split}
\end{equation*}
Their dual spaces are given by $H^{-r,-s} := (H^{r,s})'$.

We restrict these spaces to the bounded domain $Q$:
\begin{displaymath}
 \begin{split}
H^{r,s}(Q) &:= \{u|_Q~:~u\in H^{r,s}(\IR^d \times \IR) \},
\\ 
H^{r,s}(\Sigma) &:= \{u|_\Sigma~:~u\in H^{r,s}(\IR^d \times \IR) \}.
\end{split}
\end{displaymath}

They are equipped with the graph norm:
\begin{equation*}
\begin{split}
\|u\|_{H^{r,s}(Q)} &= \|u\|_{L^2(\mathcal{I};H^r(D))} +\|u\|_{H^s(\mathcal{I};L^2(D))} \\
&\sim \|u\|_{H^r(D)\otimes L^2(\mathcal{I})} +\|u\|_{L^2(D)\otimes H^s(\mathcal{I})}
\end{split}
\end{equation*}
These spaces are well-defined for $(r,s) \in [-1,1] \times \IR$ if $\Gamma \in C^{0,1}$ 
and for all $r,s \in \IR$ if $\Gamma \in C^\infty$ \cite{costabelheat}.

The Sobolev spaces with zero initial conditions at $t=0$ will be denoted by
\begin{displaymath}
 \begin{split}
\tilde H^{r,s}(Q) &:= \{u|_Q~:~u\in H^{r,s}((-\infty,T) \times \Omega),~u(x,t)=0,~t<0 \},
\\ 
\tilde H^{r,s}(\Sigma) &:= \{u|_\Sigma~:~u\in H^{r,s}((-\infty,T) \times \Gamma),~u(x,t)=0,~t<0  \}.
\end{split}
\end{displaymath}

For the sake of simplicity we will restrict ourselves in the following to  Dirichlet boundary conditions $Tu = \gamma_0 u$ and vanishing source terms $f = 0$.

We are now ready to give the two formulations of the boundary element method we will
work with. The direct method first calculates the boundary flux of the solution and then uses the 
representation formula to find the solution itself. This is particularly useful in engineering
applications, where the boundary flux is itself a quantity of interest \cite{costabelheat}.

\textbf{The direct method: }
\begin{enumerate}
 \item Find  $\psi\in \cH$ such that:
\begin{equation}
 V\psi = \left(\frac{1}{2}I+K\right)g.
\end{equation}
\item Then the unique solution $u\in \tilde H^{1,\frac{1}{2}}(Q)$ of the Dirichlet problem \eqref{1-dir} with $f=0$ can be represented by
\begin{equation}
u = \Kz\psi -\Ko g.
\end{equation}
\end{enumerate}

The second method we introduce is called the indirect method.  The main advantage of this method is that 
only the single layer operator $V$ needs to be assembled. This method could be used 
if the boundary flux is of no particular interest since the intermediate solution $\psi$ has no particular physical
meaning \cite{costabelheat}.

\textbf{The indirect method: }
\begin{enumerate}
 \item Find  $\psi\in H^{-\frac{1}{2},-\frac{1}{4}}(\Sigma)$ such that:
\begin{equation}
 V\psi = g.
\end{equation}
\item Then the unique solution $u\in \tilde H^{1,\frac{1}{2}}(Q)$ of the Dirichlet problem \eqref{1-dir} with $f=0$ can be represented by
\begin{equation}
u = \Kz\psi.
\end{equation}
\end{enumerate}
 
\subsection{Galerkin discretisation}
We now discretise these operators in time and space. In the following
sections we will discuss several different choices of discrete spaces, so we initially
keep the notation of this section general.
Let $\cX_L$ be a nested sequence of discrete spaces dense in $\cH$, i.e. 
$$\cX_0\subset \cX_1\subset ...\subset \cX_L\subset ... \subset \cH.$$
Then, the direct and indirect formulations of the heat equation with Dirichlet data
in the variational form are:
\begin{equation}\label{eq-disform}
\begin{split}
 &\text{Find }\psi_L\in \cX_L \text{ such that}\\
 &~~~~~~~       \langle V \psi_L, \eta \rangle = \langle g,\eta                \rangle, ~~~~\,~~~~~~~~~
    \text{for all }\eta\in \cX_L ~~ \text{ (Indirect method)}\\
 &\text{or }~~~ \langle V \psi_L, \eta \rangle = \langle (\frac{1}{2}+K)g,\eta \rangle, ~~~          
    \text{for all }\eta\in \cX_L ~~ \text{ (Direct method)}
\end{split}
\end{equation}

 The following lemma follows directly from the Lax-Milgram Lemma and the Lemma of C\'ea respectively,
using the coercivity and continuity of $V$ in the appropriate anisotropic Sobolev spaces
described above. See \cite{costabelheat}  for further details.

\begin{lma}\label{lma-coercive}
The solution $\psi_L\in\cX_L$ of the problems (\ref{eq-disform}) is unique and quasi-optimal:
\begin{equation}
 \|\psi-\psi_L\|_{\cH} \leq \frac{\|V\|}{c_v}\underset{\eta_L\in \cX_L}{\inf}\|\psi-\eta_L\|_{\cH}.
\end{equation}
Here $\|V\|$ and $c_v$ are the continuity and coercivity constants of the single layer
heat operator $V$ respectively.
\end{lma}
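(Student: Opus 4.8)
The plan is to apply the Lax--Milgram lemma together with C\'ea's lemma to the (non-symmetric) bilinear form $a(\psi,\eta) := \langle V\psi,\eta\rangle$ on $\cH\times\cH$. First I would record the two properties of $V$ established in \cite{costabelheat}: continuity, $|a(\psi,\eta)| \le \|V\|\,\|\psi\|_{\cH}\,\|\eta\|_{\cH}$ for all $\psi,\eta\in\cH$, and coercivity, $a(\psi,\psi) \ge c_v\,\|\psi\|_{\cH}^2$ for all $\psi\in\cH$. Since $\cX_L$ is a closed subspace of the Hilbert space $\cH$, both properties are inherited by the restriction of $a$ to $\cX_L\times\cX_L$; the right-hand side functional $\eta\mapsto\langle g,\eta\rangle$ (respectively $\eta\mapsto\langle(\frac{1}{2}+K)g,\eta\rangle$, using continuity of $K$ and the duality estimates) is bounded on $\cH$. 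Hence Lax--Milgram yields a unique $\psi\in\cH$ solving the continuous problem and, separately, a unique $\psi_L\in\cX_L$ solving \eqref{eq-disform}.

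Next I would establish Galerkin orthogonality. Because $\cX_L\subset\cH$, the exact solution also satisfies $a(\psi,\eta)=\langle g,\eta\rangle$ (resp.\ with the modified right-hand side) for every $\eta\in\cX_L$; subtracting the discrete equation gives $a(\psi-\psi_L,\eta)=0$ for all $\eta\in\cX_L$. Then for an arbitrary $\eta_L\in\cX_L$, using $\eta_L-\psi_L\in\cX_L$ in the middle step and continuity in the last step,
\begin{equation*}
 c_v\,\|\psi-\psi_L\|_{\cH}^2 \le a(\psi-\psi_L,\psi-\psi_L) = a(\psi-\psi_L,\psi-\eta_L) \le \|V\|\,\|\psi-\psi_L\|_{\cH}\,\|\psi-\eta_L\|_{\cH}.
\end{equation*}
Dividing by $\|\psi-\psi_L\|_{\cH}$ (the case $\psi=\psi_L$ being trivial) and taking the infimum over $\eta_L\in\cX_L$ gives the asserted bound with constant $\|V\|/c_v$.

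The Hilbert-space functional analysis here is routine; the only genuine input is the coercivity of $V$ in the anisotropic norm $\|\cdot\|_{\cH}$, which is the delicate point --- it relies on a Fourier-in-time argument showing the symbol of the heat single-layer operator has positive real part --- but this is exactly the result quoted from \cite{costabelheat}. I would also note explicitly that $V$ is not symmetric, so the symmetric energy-norm version of C\'ea does not apply directly; the coercivity-plus-continuity argument above is the standard substitute and is what I would use. No further obstacles are expected.
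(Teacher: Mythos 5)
Your argument is exactly the one the paper invokes: it states that the lemma ``follows directly from the Lax--Milgram Lemma and the Lemma of C\'ea,'' using the continuity and coercivity of $V$ in the anisotropic Sobolev spaces from \cite{costabelheat}, which is precisely the Galerkin-orthogonality-plus-coercivity computation you wrote out. Your version is simply a fully spelled-out form of the same standard proof, with the correct constant $\|V\|/c_v$, so there is nothing to add.
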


Let $\{b_i(x,t)\}_{i=1...N}=\{b_{i_x}(x) \cdot b_{i_t}(t)\}_{i=(i_x,i_t)}$ be a basis of
the discrete space $\cX_L$,
where $N$ is the number of basis functions.
Then, finding the discrete solution $\psi = \sum_{j=1}^N c_j b_j \in \cX_L$ 
requires the solution of a linear system of the form
$$\sum_{j=1}^N \langle V b_j, b_i\rangle c_j = \langle g, b_i\rangle,~~~~\forall i = {1...N}.$$

We refer to the resultant matrix as
$A_{i,j} = \langle V b_j, b_i\rangle$.
The structure of the matrix $A$ follows directly from the form of the fundamental solution.
In particular, since $G(x,t) = 0$ if $t<0$, the
matrix is block lower triangular.

 \section{Error Analysis for Full Tensor-products}\label{sec-fulltp}
 We start with a simple discretisation of the integral formulation of the heat equation using
 tensor products of piecewise polynomial basis functions in time and space. In this section
 we give improved error bounds for certain choices of polynomial degrees. 
 The classical theory due to \cite{costabelheat} uses an Aubin-Nitsche duality argument and properties
 of the $L^2$-orthogonal projection. We propose a new proof using as a main ingredient norm equivalences,
 which can be shown using  a wavelet
 decomposition of the discrete spaces.
 
We start by introducing the spaces of piecewise polynomials in time and space. Let $\cX_i^x $ and $\cX_j^t $ be the
nested finite element spaces:
 \begin{equation*}
 \begin{split}
 &\cX_0^x \subset \cX_1^x \subset .... \subset \cX^x_{i}\subset H^{-\frac{1}{2}}(\Gamma)\text{ and }\\
 &\cX_0^t \subset \cX_1^t \subset .... \subset \cX^t_{j}\subset H^{-\frac{1}{4}}(\mathcal{I}).
 \end{split}
 \end{equation*}
 In time we simply define the discrete spaces as discontinuous piecewise
 polynomials on an equidistant grid, which is refined by bisection. More precisely,
 $$\cX_{j}^t=\{ v\in L^2(\mathcal{I})~:~v|_{[t_k,t_{k+1}]} \in \mathcal{P}_{p_t},~p_t\geq 0,~k=0,\dots,2^j-1\}.$$
 
In space we define the discrete spaces using parameterisations $\gamma:[0,1]^{d-1} \rightarrow \Gamma$  of the boundary of the spatial domain.
In two dimensions the smooth boundary $\Gamma$ can be paramaterised using a single smooth, $1$-periodic function $\gamma$. In higher dimensions or in 
the case of a piecewise smooth boundary $\Gamma$ the domain is cut up into non-overlapping patches and each is parameterised by a smooth function.
Thus, 
 \begin{displaymath}
  \cX_{i}^x=\{v\in L^2(\Gamma)~:~v|_\tau \circ \gamma\in\mathcal{P}_{p_x},~p_x\geq 0~\forall \tau\in\tau_i\},
 \end{displaymath}
where $\tau_i$ is the triangulation of the boundary at level $i$ and
$\mathcal{P}_{p_x}$ is the space
of polynomials of degree $p_x$. 
Again, we refine by halving the support of the basis functions in each direction. 
For a two dimensional problem this is simply a bisection. For a three dimensional problem
discretised using triangles this corresponds to a red refinement.

Then the full tensor product space $\cX_L$ can be defined as $$\cX_L = \cX_L^x \oplus \cX_L^t.$$
 
Next we introduce a multilevel decomposition of the space $\cX_L$. This decomposition is needed to
show the norm equivalences that are central to the proof.
We construct a system of subspaces $W_j$, that
 are pairwise orthogonal with respect to the $L^2$ inner product. They give a multilevel decomposition of $\cX^x_i$ as 
 follows:
 \begin{equation}\label{eq-multilevel}
   \cX^x_i = W^x_0 \oplus ... \oplus W^x_i,~~~~~ \cX^t_j = W^t_0 \oplus ... \oplus W^t_j
 \end{equation}
 
 In particular, for  $\psi\in H^{r,s}(\Sigma)$ with 
 $\psi=\sum_{(\ell_x,\ell_t)\geq 0} w_{(\ell_x,\ell_t)}$ and $w_{(i,j)}\in W_{i}^x\otimes W_{j}^t$, we have (see \cite{griebelknapek})
 \begin{equation}\label{eq-13}
   \|\psi\|_{H^{r,s}(\Sigma)}^2 \sim \left\{
\begin{array}{cl}
\sum_{(\ell_x,\ell_t)\geq 0}2^{2\max\{r\ell_x,s\ell_t\}}\|w_{(\ell_x,\ell_t)}\|^2_{L^2(\Sigma)}, & r,s \geq 0, \\[1.5ex]
\sum_{(\ell_x,\ell_t)\geq 0}2^{-2\max\{|r|l_x,|s|l_t\}}\|w_{(\ell_x,\ell_t)}\|^2_{L^2(\Sigma)}, & r,s < 0.
\end{array} \right.
 \end{equation}
 
 These norm equivalences deliver (sharp) upper and lower bounds for the error estimates,
 which we use in the forthcoming error analysis.

 We generalise the tensor product spaces of the form $\cX_L^x\otimes\cX^t_L$ by using the above
 multilevel decomposition and including $W^x_i\otimes W^t_j$ for certain indices $(i,j)$.
 The resulting spaces are then described fully by the indices of the included
 subspaces. The index set corresponding to the anisotropic full tensor product space is
\begin{equation}
 I_L^\sigma= \{(\ell_x,\ell_t)~:~\ell_x\leq L/\sigma,~\ell_t\leq\sigma L\},
\end{equation}
 where $\sigma$ is a positive real parameter that can be chosen freely. The form of scaling used here
 mirrors the definition of anisotropic sparse grid index sets from \cite{griebelharbrecht} and allows
 us to easily compare to those sets in Section \ref{sec-sg}. The index set is visualised in Figure \ref{fig-fulltp}. In subsequent sections we will characterise
  various sparse grid discretisation spaces by other choices of index set.
  
 The parameter $\sigma$ denotes the scaling in time and space.
 For example, if $\sigma=2$, we use four times as many discretisation levels in time than in space. 
On the other hand if $\sigma<1$ we refine more strongly in space than in time.

 Recall that $\psi=\sum_{(\ell_x,\ell_t)\geq 0}w_{(\ell_x,\ell_t)}$. Then by letting
 $\psi_L= \sum_{(\ell_x,\ell_t)\in I_L^\sigma}w_{(\ell_x,\ell_t)}\in \cX_L$ we get the following by applying
 \eqref{eq-13} twice.
 
\begin{figure}[t]
  \centering
  \includegraphics{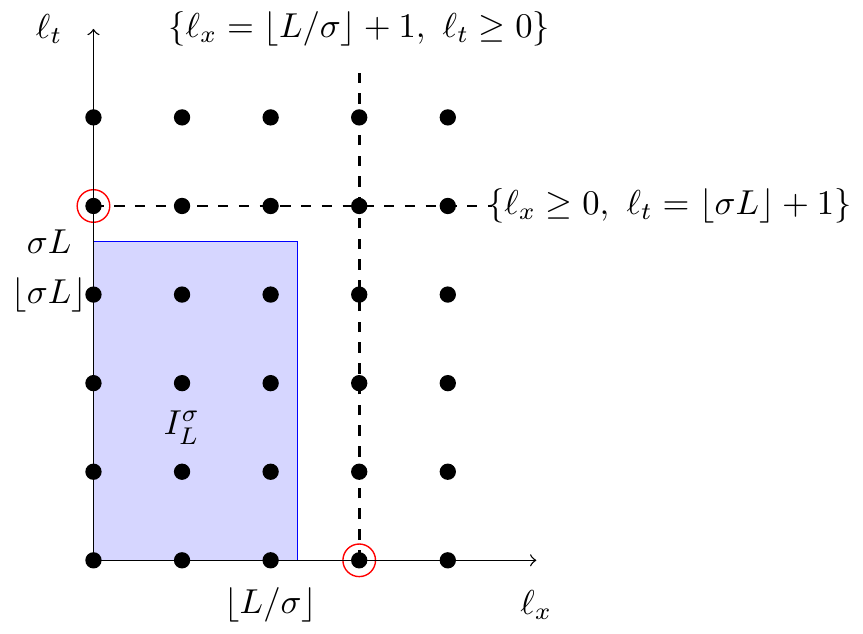}
 \caption{The full tensor product index set $I_{L}^\sigma$. The two potential minima outside the set
           $I_{L}^\sigma$ are circled in red.}
 \label{fig-fulltp}
\end{figure}
  
  \begin{equation}\label{eq-14}
  \begin{split}  
   &\inf_{\psi_L\in \cX_L}\|\psi-\psi_L\|^2_{\cH} \sim \sum_{(\ell_x,\ell_t)\notin I_L^\sigma}2^{-2\max\{\frac{\ell_x}{2},\frac{\ell_t}{4}\}}\|w_{(\ell_x,\ell_t)}\|^2_{\cL}\\ 
   &\leq \left(\max_{(\ell_x,\ell_t)\notin I_L^\sigma} 2^{-2\max\{\frac{\ell_x}{2},\frac{\ell_t}{4}\}-2\max\{\mu\ell_x,\lambda\ell_t\}}\right)
   \cdot \sum_{(\ell_x,\ell_t)\notin I_L^\sigma}2^{2\max\{\mu\ell_x,\lambda\ell_t\}}\|w_{(\ell_x,\ell_t)}\|^2_{\cL} \\
   & \leq \left(\max_{(\ell_x,\ell_t)\notin I_L^\sigma} 2^{-2\max\{\frac{\ell_x}{2},\frac{\ell_t}{4}\}
   -2\max\{\mu\ell_x,\lambda\ell_t\}}\right) \cdot \|\psi\|^2_{H^{\mu,\lambda}(\Sigma)}.
  \end{split}
 \end{equation}
 
  The term $2^{-2(\max\{\frac{\ell_x}{2},\frac{\ell_t}{4}\}+2\max\{\mu\ell_x,\lambda\ell_t\})}$ reaches its maximum when the negative
  exponent is as small as possible. We define
  \begin{equation}\label{eq-f}
   f(\ell_x,\ell_t):= \max\left\{\frac{\ell_x}{2},\frac{\ell_t}{4}\right\}+\max\{\mu\ell_x,\lambda\ell_t\}.
  \end{equation}
  Then, in order to determine the error bound we find the minimum
  \begin{equation}\label{eq-n}
   n := \min_{(\ell_x,\ell_t)\notin I_L^\sigma} f(\ell_x,\ell_t).
  \end{equation}
  To do so we use the following properties of monotonically increasing functions. 
  
  \begin{dfn}\label{dfn-mon}
   The function $F(\ell_x,\ell_t)$ is called \emph{a monotonically increasing function} if
   \begin{displaymath}
    \begin{split}
     &F(\ell_x+k,\ell_t) \geq F(\ell_x,\ell_t),~~~~~~\forall k\geq 0 \quad \text{ and }\\
     &F(\ell_x,\ell_t+k) \geq F(\ell_x,\ell_t),~~~~~~\forall k\geq 0.
    \end{split}
   \end{displaymath}
  \end{dfn}
 
   \begin{lma}\label{lma-mon}
   Let $F$ be a monotonically increasing function. Then its minimum outside the set $I_L^\sigma$ is
   \begin{displaymath}
    \min_{(\ell_x,\ell_t)\notin I_L^\sigma} F(\ell_x,\ell_t) = 
    \min\{ F(\lfloor L/\sigma \rfloor +1,0), F(0,\left\lfloor \sigma L \right\rfloor +1) \}.
   \end{displaymath}
  \end{lma}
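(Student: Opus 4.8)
The plan is to exploit the very definition of a monotonically increasing function together with the geometry of the complement of the rectangle $I_L^\sigma$. Recall that $I_L^\sigma = \{(\ell_x,\ell_t) : \ell_x \le L/\sigma,\ \ell_t \le \sigma L\}$ is an axis-aligned rectangle (restricted to the nonnegative integer lattice), so its complement splits into the region where $\ell_x \ge \lfloor L/\sigma\rfloor + 1$ and the region where $\ell_t \ge \lfloor \sigma L\rfloor + 1$. The key observation is that every point in the complement dominates (coordinatewise, in the sense of Definition \ref{dfn-mon}) one of the two "corner" points $(\lfloor L/\sigma\rfloor+1,\,0)$ or $(0,\,\lfloor \sigma L\rfloor+1)$.

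First I would make that domination statement precise: if $(\ell_x,\ell_t) \notin I_L^\sigma$, then either $\ell_x \ge \lfloor L/\sigma\rfloor + 1$, in which case $(\ell_x,\ell_t) = (\lfloor L/\sigma\rfloor+1,0) + (\ell_x - \lfloor L/\sigma\rfloor - 1,\ \ell_t)$ with both increments nonnegative, or $\ell_t \ge \lfloor \sigma L\rfloor + 1$, in which case similarly $(\ell_x,\ell_t)$ dominates $(0,\lfloor \sigma L\rfloor+1)$. Applying the two monotonicity inequalities of Definition \ref{dfn-mon} in succession (once in each coordinate) then yields
\begin{equation*}
 F(\ell_x,\ell_t) \ge F(\lfloor L/\sigma\rfloor+1,0) \quad\text{or}\quad F(\ell_x,\ell_t) \ge F(0,\lfloor \sigma L\rfloor+1),
\end{equation*}
so in either case $F(\ell_x,\ell_t) \ge \min\{F(\lfloor L/\sigma\rfloor+1,0),\ F(0,\lfloor \sigma L\rfloor+1)\}$. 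Taking the infimum over all $(\ell_x,\ell_t)\notin I_L^\sigma$ gives one inequality. For the reverse inequality, I just note that both corner points $(\lfloor L/\sigma\rfloor+1,0)$ and $(0,\lfloor \sigma L\rfloor+1)$ actually lie outside $I_L^\sigma$ — the first because its $x$-coordinate exceeds $L/\sigma$, the second because its $t$-coordinate exceeds $\sigma L$ — so the minimum over the complement is at most $F$ evaluated at either of them, hence at most the smaller of the two. Combining the two inequalities gives the claimed identity.

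I do not anticipate a serious obstacle here; the statement is essentially combinatorial. The one point requiring mild care is the boundary/edge behaviour of the floor functions: one must check that the corner points are genuinely in the complement (strict inequality $\lfloor L/\sigma\rfloor + 1 > L/\sigma$ holds for every real $L/\sigma$, and likewise for $\sigma L$), and that the "either/or" dichotomy for points in the complement is exhaustive — which it is, since failing both $\ell_x \le \lfloor L/\sigma\rfloor$ and $\ell_t \le \lfloor \sigma L\rfloor$ is exactly the negation of membership in $I_L^\sigma$ on the integer lattice. A secondary subtlety is that the minimum over the (infinite) complement is attained; this follows because $F$ being monotonically increasing forces the infimum to be realised at one of the two dominated corner points, so no compactness argument is needed. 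Everything else is a direct application of Definition \ref{dfn-mon}.
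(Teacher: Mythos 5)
Your proof is correct and follows essentially the same route as the paper's: decompose the complement of the rectangle into the two half-strips, apply the monotonicity of Definition~\ref{dfn-mon} once in each coordinate to push any point down to one of the two corner indices $(\lfloor L/\sigma\rfloor+1,0)$ or $(0,\lfloor\sigma L\rfloor+1)$, and observe that both corners lie outside $I_L^\sigma$. Your explicit verification of the reverse inequality (that the corners belong to the complement, so the minimum is actually attained there) is a small point the paper leaves implicit, but the argument is otherwise identical.
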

  
  \begin{proof}
  Let $\ell_x\geq L/\sigma+1$ Then there holds
  \begin{displaymath}
   F(\ell_x,\ell_t) \geq F(\lfloor L/\sigma\rfloor+1,\ell_t)
  \end{displaymath}
  by Definition \ref{dfn-mon}.
  Analogously if we let $\ell_t\geq \left\lfloor \sigma L \right\rfloor +1$, there holds
    \begin{displaymath}
   F(\ell_x,\ell_t) \geq F(\ell_x,\left\lfloor \sigma L \right\rfloor +1)
  \end{displaymath}
  Together this tells us that the minimising multi-index lies in the subset
  \begin{displaymath}
   \{(\ell_x,\ell_t)~:~\ell_x=\lfloor L/\sigma \rfloor+1 \text{ or }l_t= \left\lfloor \sigma L \right\rfloor +1\}\subset \{(\ell_x,\ell_t)\notin I_L^\sigma\}.
  \end{displaymath}
  In Figure \ref{fig-fulltp} this subset is depicted by the dashed lines.
  
  Now let $\ell_x=\lfloor L/\sigma \rfloor+1$ and $\ell_t\geq0$, then there holds
  \begin{displaymath}
   F(\ell_x,\ell_t)\geq F(\lfloor L/\sigma \rfloor+1,0).
  \end{displaymath}
  Analogously, for $\ell_x\geq0$ and $\ell_t=\left\lfloor \sigma L \right\rfloor +1$ we have
  \begin{displaymath}
   F(\ell_x,\ell_t)\geq F(0,\left\lfloor \sigma L \right\rfloor +1).   
  \end{displaymath}
  This shows that the minimum can only be attained at
  $(\lfloor L/\sigma \rfloor + 1,0)$ or $(0,\left\lfloor \sigma L \right\rfloor +1)$ as
  asserted.
  \end{proof}

   To estimate the convergence rate from (\ref{eq-14}) we require the minimum $n$ defined in (\ref{eq-n}).
  Clearly, the function of the exponent $f(\ell_x,\ell_t)$ is a monotonically increasing function.
  Using Lemma \ref{lma-mon} we get
  \begin{displaymath}
   \begin{split}
    n &= \min_{(\ell_x,\ell_t)\notin I_L^\sigma}f(\ell_x,\ell_t) = \min\left\{\left(\mu+\frac{1}{2}\right)\left(\lfloor L/\sigma \rfloor+1\right),\left(\lambda+\frac{1}{4}\right)\left(\left\lfloor \sigma L \right\rfloor +1\right)\right\}\\
      &\sim \min\left\{\left(\mu+\frac{1}{2}\right)\sigma^{-1}, \left(\lambda + \frac{1}{4}\right) \sigma\right\}(L+1).
   \end{split}
  \end{displaymath}
  Thus, the minimum is
  \begin{equation}\label{eq-gensig}
   n\sim(L+1)\begin{cases}
   \left(\lambda + \dfrac{1}{4}\right) \sigma, & \sigma^2 \leq \dfrac{\mu+1/2}{\lambda+1/4} \\[2ex]
        \left(\mu+\dfrac{1}{2}\right)\sigma^{-1}, & \text{else}.
      \end{cases}
  \end{equation}
 
 We can now formulate the following theorem on the convergence in the energy norm.
 \begin{thm}\label{thm}
 Let $d>1$ and let $\mu,\lambda$ fulfil 
 $\mu\leq p_x+1$ and $\lambda\leq p_t+1$ and
 let $c>0$ be a constant depending only
 on $\sigma,d,\mu$ and $\lambda$. Further, denote by $N_L$ the total number of
 degrees of freedom, i.e. $N_L\sim 2^{(\sigma + (d-1)/\sigma)L}$. Then the convergence in the energy norm is  
   \begin{displaymath}
  \|\psi-\psi_L\|_{\cH}^2 \leq cN_L^{-\frac{2\mu+1}{d-1+\sigma^2}}\|\psi\|_{H^{\mu,\lambda}(\Sigma)}^2 
  \quad \text{for} \quad \sigma^2 = \frac{\mu+1/2}{\lambda+1/4}.
 \end{displaymath}
 \end{thm}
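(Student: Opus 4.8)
The plan is to chain three ingredients: the quasi-optimality from Lemma \ref{lma-coercive}, the best-approximation estimate already assembled in \eqref{eq-14}, and a conversion of the level-indexed bound into a bound in the number of degrees of freedom. Squaring Lemma \ref{lma-coercive}, it suffices to estimate $\inf_{\eta_L\in\cX_L}\|\psi-\eta_L\|_{\cH}^2$. Expanding $\psi=\sum_{(\ell_x,\ell_t)\geq0}w_{(\ell_x,\ell_t)}$ in the multilevel decomposition \eqref{eq-multilevel} and choosing $\eta_L$ to be the truncation of this series to $I_L^\sigma$, the norm equivalence \eqref{eq-13} with $(r,s)=(-\tfrac12,-\tfrac14)$ turns the infimum into the weighted tail sum in the first line of \eqref{eq-14}; pulling out the largest weight and applying \eqref{eq-13} once more with $(r,s)=(\mu,\lambda)$ bounds the remaining sum by $\|\psi\|_{H^{\mu,\lambda}(\Sigma)}^2$. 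Here the hypotheses $0\le\mu\le p_x+1$, $0\le\lambda\le p_t+1$ are exactly what makes \eqref{eq-13} valid on the piecewise-polynomial multiresolution being used. This reproduces \eqref{eq-14} and gives $\inf_{\eta_L}\|\psi-\eta_L\|_{\cH}^2\lesssim 2^{-2n}\|\psi\|_{H^{\mu,\lambda}(\Sigma)}^2$ with $n$ as in \eqref{eq-n} and $f$ as in \eqref{eq-f}.

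Next I would compute $n$. Because $f$ is a monotonically increasing function in the sense of Definition \ref{dfn-mon}, Lemma \ref{lma-mon} reduces the minimisation to the two corners $(\lfloor L/\sigma\rfloor+1,0)$ and $(0,\lfloor\sigma L\rfloor+1)$, at which $f$ takes the values $(\mu+\tfrac12)(\lfloor L/\sigma\rfloor+1)$ and $(\lambda+\tfrac14)(\lfloor\sigma L\rfloor+1)$; this is \eqref{eq-gensig}, namely $n\sim(L+1)\min\{(\mu+\tfrac12)\sigma^{-1},(\lambda+\tfrac14)\sigma\}$. The reason for the choice $\sigma^2=(\mu+\tfrac12)/(\lambda+\tfrac14)$ is that the two quantities in this minimum then coincide, so $n\sim(L+1)\sqrt{(\mu+\tfrac12)(\lambda+\tfrac14)}$ and $\inf_{\eta_L}\|\psi-\eta_L\|_{\cH}^2\lesssim 2^{-2L\sqrt{(\mu+1/2)(\lambda+1/4)}}\|\psi\|_{H^{\mu,\lambda}(\Sigma)}^2$, the ``$+1$'' only producing a harmless constant.

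Finally I would pass from $L$ to $N_L$. A bisection doubles the number of temporal basis functions and a refinement step on the $(d-1)$-dimensional boundary multiplies the spatial ones by $2^{d-1}$, so $\dim\cX^t_{\sigma L}\sim 2^{\sigma L}$, $\dim\cX^x_{L/\sigma}\sim 2^{(d-1)L/\sigma}$ and hence $N_L\sim 2^{(\sigma+(d-1)/\sigma)L}$, i.e. $2^{-2L}\sim N_L^{-2/(\sigma+(d-1)/\sigma)}$. Substituting into the bound above and simplifying the exponent,
\[
\frac{2\sqrt{(\mu+\tfrac12)(\lambda+\tfrac14)}}{\sigma+(d-1)/\sigma}
=\frac{2(\mu+\tfrac12)(\lambda+\tfrac14)}{(\mu+\tfrac12)+(d-1)(\lambda+\tfrac14)}
=\frac{2\mu+1}{d-1+\sigma^2},
\]
where the first equality uses $\sigma=\sqrt{(\mu+\tfrac12)/(\lambda+\tfrac14)}$ and the second divides numerator and denominator by $\lambda+\tfrac14$. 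Collecting the implied constants into $c=c(\sigma,d,\mu,\lambda)$ gives the claim. I expect the main obstacle to be this last bookkeeping: one must verify that the floors $\lfloor L/\sigma\rfloor$, $\lfloor\sigma L\rfloor$, the ``$+1$''s and the factor $\|V\|/c_v$ only affect $c$ and not the exponent, and---conceptually---that the two regimes of \eqref{eq-gensig} really merge at the chosen $\sigma$, since otherwise the smaller one would enforce a slower rate.
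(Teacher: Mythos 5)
Your proposal is correct and follows essentially the same route as the paper: the preparatory steps (the norm-equivalence argument yielding \eqref{eq-14}, the minimisation via Lemma \ref{lma-mon} giving \eqref{eq-gensig}) are established in the text before the theorem, and the paper's proof then consists exactly of your final step of balancing the two regimes at $\sigma^2=(\mu+\tfrac12)/(\lambda+\tfrac14)$ and rewriting $2^{-2n}$ in terms of $N_L\sim 2^{(\sigma+(d-1)/\sigma)L}$. Your explicit verification that the exponent simplifies to $\tfrac{2\mu+1}{d-1+\sigma^2}$, and your remark that the floors and ``$+1$''s only affect the constant, merely spell out details the paper leaves implicit.
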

 
 \begin{proof}
 Inserting the exponent $n$ from (\ref{eq-gensig}) into (\ref{eq-14}) we get
 \begin{displaymath}
  \|\psi-\psi_L\|_{\cH}^2 \leq c2^{-2n}\|\psi\|_{H^{\mu,\lambda}(\Sigma)}^2 
 \end{displaymath}
  For $\sigma^2 = \frac{\mu+1/2}{\lambda+1/4}$ we have 
 $n = \left(\lambda + \dfrac{1}{4}\right) \sigma = \left(\mu+\dfrac{1}{2}\right)\sigma^{-1}$.
 Finally, rewriting in terms of degrees of freedom $N_L\sim 2^{(\sigma  + (d-1)/\sigma)L}$ we get
 \begin{displaymath}
 \begin{split}
  \|\psi-\psi_L\|_{\cH}^2 
  &\leq cN_L^{-\frac{2\mu+1}{d-1+\sigma^2}}\|\psi\|_{H^{\mu,\lambda}(\Sigma)}^2 .
 \end{split}
 \end{displaymath}
 \end{proof}
 
 \begin{remark}
  The highest possible rate of convergence is attained in the space $H^{p_x+1,p_t+1}(\Sigma)$, 
 so we restrict the regularity constants $\mu$ and $\lambda$ by the polynomial degrees $p_x+1$ and
 $p_t+1$ respectively \cite{griebelharbrecht}.
 \end{remark}
 
  \begin{table}[tb]
 \centering
 \begin{tabular}{c||c|c}
  \hline
   \multicolumn{3}{c}{} \\[-.7em]
  \multicolumn{3}{c}{Full tensor product, $d=2$}\\[.3em]
  \hline
     & & \\[-.7em]
  \small{$(p_x,p_t)$} & \small{conv. rate $\gamma$}  & \small{scaling $\sigma^2$} \\[.3em]
  \hline
     & & \\[-.7em]
 $(0,0)$     & $\frac{15}{22}\sim .7$           & $\frac{6}{5}$   \\[.5em]
 $(1,0)$     & $\frac{5}{6}\sim .8$             & $2$   \\[.5em]
 $(0,1)$     & $\frac{9}{10}\sim.9$             & $\frac{2}{3}$   \\[.5em]
 $(1,1)$     & $\frac{45}{38}\sim 1.2$          & $\frac{10}{9}$  \\[.5em]
\end{tabular}\hspace{1em}
 \begin{tabular}{c||c|c}
 \hline
  \multicolumn{3}{c}{} \\[-.7em]
 \multicolumn{3}{c}{Full tensor product, $d=3$}\\[.3em]
 \hline
    & & \\[-.7em]
 \small{$(p_x,p_t)$} & \small{conv. rate $\gamma$} &  \small{scaling $\sigma^2$} \\[.3em]
 \hline
    & & \\[-.7em]
 $(0,0)$     & $\frac{15}{32}\sim .46$          & $\frac{6}{5}$  \\[.5em]
 $(1,0)$     & $\frac{5}{8}\sim .62$            & $2$   \\[.5em]
 $(0,1)$     & $\frac{9}{16}\sim.56$            & $\frac{2}{3}$   \\[.5em]
 $(1,1)$     & $\frac{45}{56}\sim .8$           & $\frac{10}{9}$ \\[.5em]
\end{tabular}
 \caption{Improved convergence rates and optimal values of $\sigma$ for full
 product discretisations in $2$ and $3$ dimensions.}
 \label{table-1}
\end{table}
 
In Table \ref{table-1} we give the convergence rates and optimal
choices of $\sigma$ for different combinations of polynomial degrees.
In the case of an analytic solution the only constraints on $\mu$ and $\lambda$ lie in
the polynomial degree of the discretisation. Thus, we can choose $\mu = p_x+1$
and $\lambda=p_t+1$ to maximise
the convergence rate. Then, the optimal scaling $\sigma$ can be written
in terms of polynomial degree as
$\sigma^2=\frac{4p_x+6}{4p_t+5}$.
In two dimensions and for $p_x=p_t=0$ the optimal scaling is $\sigma = \frac{6}{5}$ and this gives a higher
convergence rate than that for $\sigma =1$ in the classical theory from \cite{costabelheat}, suggesting that this
scaling should be used instead.

As the polynomial degrees in time and space $p_x=p_t$ increase the improvement becomes smaller.
The results from Theorem \ref{thm} and Table \ref{table-1} give the largest
improvement for the case $p_x=p_t=0$. This happens since for large $p_x=p_t$ the term $\frac{4\mu+2}{4\mu+1}$ approaches $1$ and
thus the results approach the results given in \cite{costabelheat}.

\subsection{Numerical Experiments}
We now verify the expected convergence rates for different choices of scaling factor $\sigma^2$.
In these numerical experiments we set the time horizon to $T=4$, giving the time interval
$\mathcal{I}:=(0,4)$. We initially choose a unit circle $B_1(0)$ as the domain $\Omega$.
Thus, $Q:= B_1(0) \times \mathcal{I}$ is the space-time cylinder
with mantle $\Sigma = \partial B_1(0)\times \mathcal{I}$
and we solve the Dirichlet problem (\ref{1-dir}) with the right hand side
$g(r,\varphi,t)=t^2\cos(\varphi)$. The advantage of this test is that the exact solution
can be calculated to verify the numerical solution \cite{owndiss}. 

In Figure \ref{fig-cost2} we plot the convergence of the
boundary flux in the energy norm squared. For the curve corresponding
to the scaling $\sigma = 6/5$ the expected convergence rate from Theorem \ref{thm} is $\frac{15}{11}$.
As we can see the convergence rate is close to the predicated rate.
We also run tests with two other values of $\sigma$. 
When $\sigma=1$, we have  a slightly lower convergence rate, as predicted by (\ref{eq-gensig}).
Further, the constant for $h_t\sim h_x^{6/5}$ leads to a lower overall error.
The convergence rate in this case is expected to be $\frac{5}{4}$.
Lastly, when $\sigma=2$ we expect a slower convergence rate of $1$ from (\ref{eq-gensig}).
The numerical tests confirm these  rates.
  
Next we give some more challenging tests on an ellipse with the
semi-axes $a=0.8$ and $b=0.5$. For these tests
it is simpler to use the indirect method, as the exact solution is not
known. We use a value calculated with $\sim 3.3\cdot10^7$ degrees of freedom
as an approximation of the exact solution to calculate the error. For this test we use the more oscillatory
right hand side $g(\varphi,t)=t^2\cos(2\varphi)$.

In Figure \ref{fig-ellipse1} one can see that the correspondence to the expected rates is good for
$\sigma=2$, where we again expect a rate of exactly $1$.
At $\sigma=1$ the rate  should be $5/4=1.25$,
and is in fact somewhat higher than that.
In particular, the error for $\sigma=1$ is smaller
than the error for $\sigma=\frac{6}{5}$. The reason for this discrepancy is not clear.
The expected convergence rate for $\sigma=\frac{6}{5}$ is $\frac{15}{11}$, and Figure \ref{fig-ellipse1} shows a good
correspondence to this rate.

   \begin{figure}[tb]
  \includegraphics[scale=0.39]{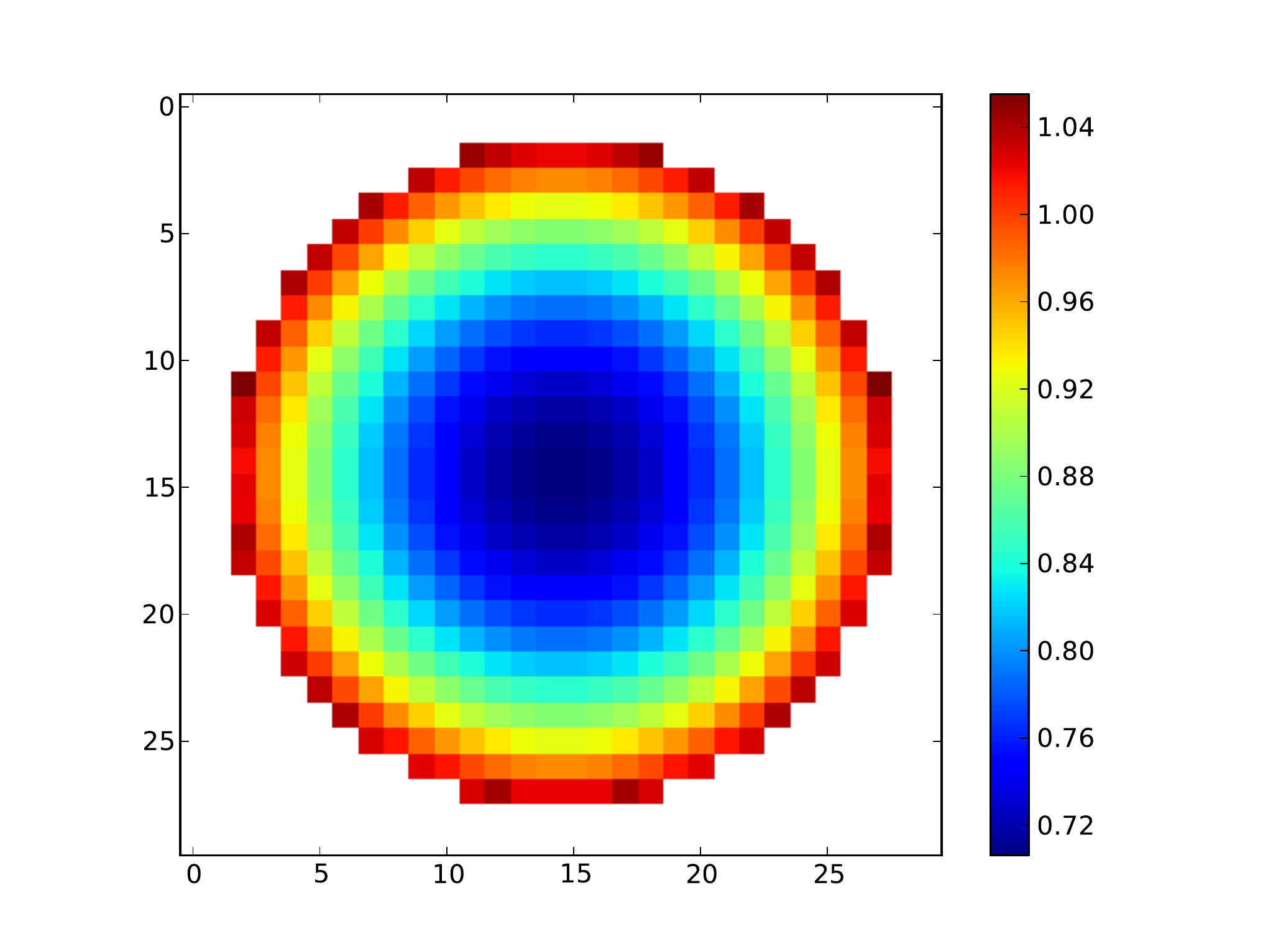} 
  \includegraphics{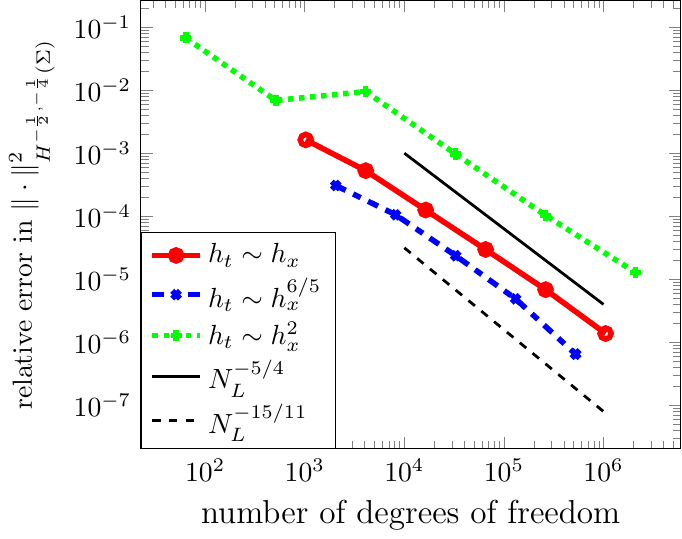}
 \caption{Convergence of the boundary flux in the energy norm for the right hand side
 $g(r,\varphi,t)=t^2\cos(\varphi)$ in the case $p_x=p_t=0$.}
 \label{fig-cost2}
 \end{figure}

  \begin{figure}[tb]
 \includegraphics[scale=0.39]{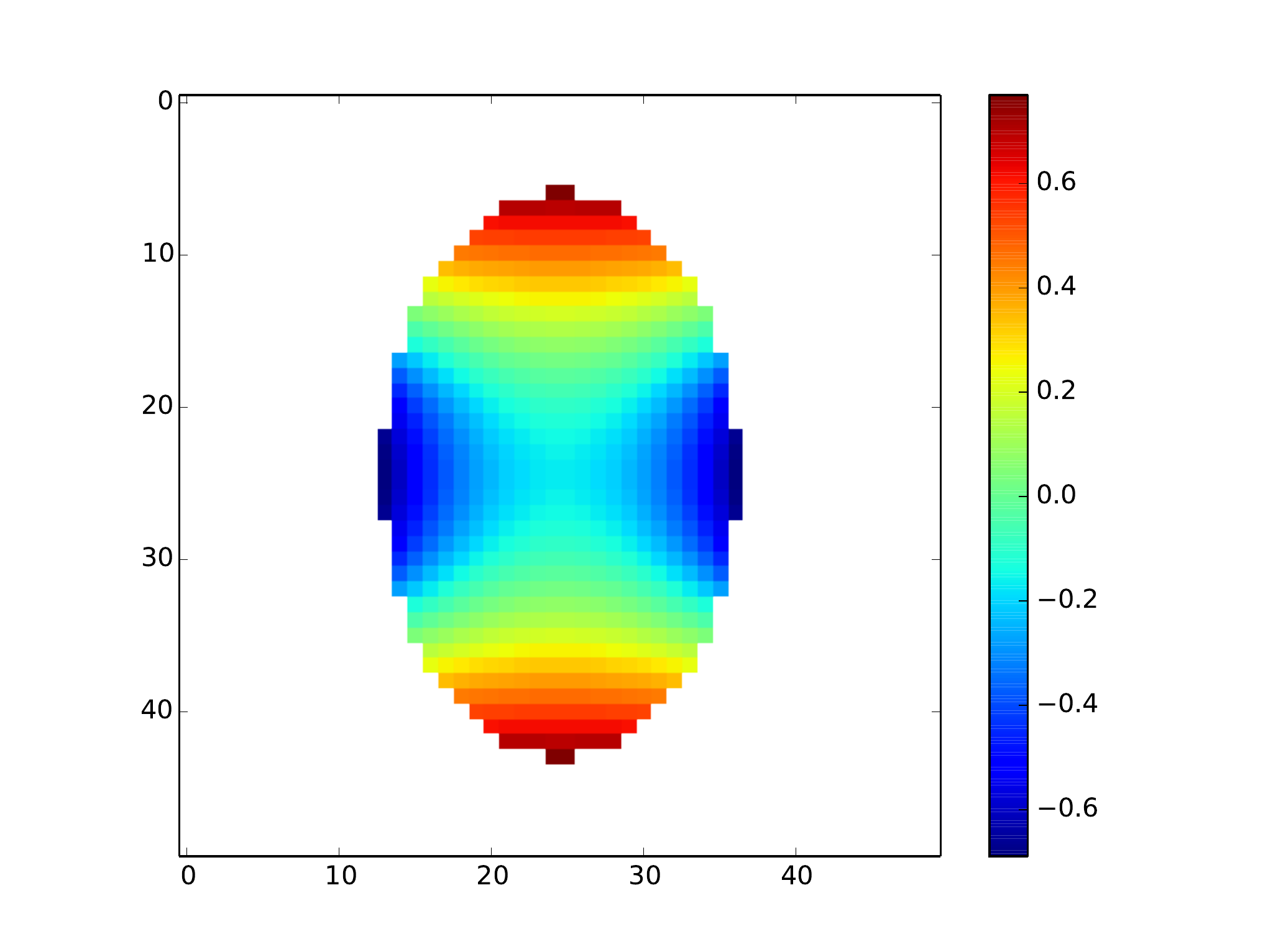}  
 \includegraphics{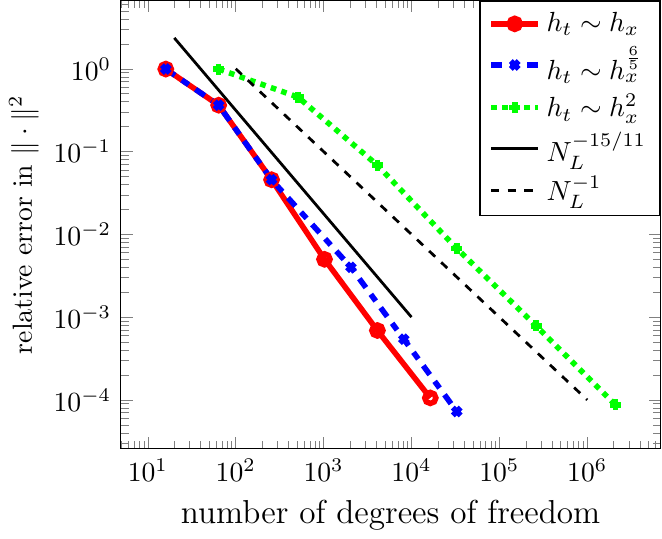}
 \caption{Left: The approximated solution on an ellipse for $g(\varphi,t)=t^2\cos(2\varphi)$  at timestep $t=1$.
 Right: Convergence of the boundary flux in  squares of the energy norm for the right hand side $g(\varphi,t)=t^2\cos(2\varphi)$ 
on an ellipse with eccentricities $a=0.8,b=0.5$.}
 \label{fig-ellipse1}
\end{figure}

 \section{Sparse Grids}\label{sec-sg}
The natural tensor product structure of $\Sigma= \Gamma\times\mathcal{I}$
in space and time lends itself to a sparse grid approximation. The main idea
behind these methods is to truncate the tensor-product expansion
of a one-dimensional multilevel basis to improve the cardinality of the tensor product in
space-time. To approximate a function such as $f(x,t)=f_1(x)f_2(t)$ that can  be
decomposed entirely in time and space the subspaces that are highly refined both in space
and time can be dropped entirely without a loss of approximation properties. This is particularly
helpful as these are precisely the most 'expensive' subspaces, i.e. those containing the most
degrees of freedom.

A sparse Galerkin discretisations yields only a mild dependence on the dimension.
More precisely, standard sparse grid methods
scale in dimension with $\mathcal{O}(N(\log N)^{d-1})$ \cite{zenger}, while the full tensor product scales
with $\mathcal{O}(N^d)$, where $N$ is the number of degrees of freedom. Thus, the sparse
grid approximation can alleviate the 'curse of dimensionality'.

 Let $\cX_L^x$ be the discrete space in the spatial dimensions and $\cX_L^t$ the
discrete space in time. The first step in defining sparse grid structures is the definition
of one-dimensional multilevel decompositions. They can then be combined
to form sparse grid spaces. As in (\ref{eq-multilevel}) assume that there exists a system 
of subspaces $W_i^x$ and $W_j^t$ such that
\begin{displaymath} 
\cX_L^x = W_0^x \oplus \cdots \oplus W_L^x,\qquad
\cX_L^t = W_0^t \oplus \cdots \oplus W_L^t. 
\end{displaymath}

The general form of a sparse grid space in two dimensions is
\begin{equation*}\label{SparseTP}
\hat \cX_L := \bigotimes_{(\ell_x,\ell_t)\in J_L}
W_{\ell_x}^x \otimes W_{\ell_t}^t
\subset \cX = H^{-\frac{1}{2},-\frac{1}{4}}(\Sigma)
\end{equation*}
where $J_L$ is an index set, which can be freely chosen.

The approximation properties of the space $\hat\cX_L$ are entirely dependent on the choice of index
set $J_L$. In Section \ref{sec-fulltp} for example we have seen the approximation properties
for the set index set $I_L^\sigma= \{(\ell_x,\ell_t)~:~\ell_x\leq L/\sigma,~\ell_t\leq\sigma L\}$,
there we essentially only had to find the minimum of a monotonically increasing function
outside of the subset $I_L$ in order to find the error in the energy norm. The same 
is true for more general index sets $J_L$. 

\begin{dfn}
The standard anisotropic sparse grid index set is defined as
\begin{equation}\label{sg-indset}
 \hat J_L^\sigma = \left\{ (\ell_x,\ell_t)~:~\ell_x\sigma + \ell_t/\sigma\leq L \right\},
\end{equation}
where $0<\sigma<\infty$ is a free parameter  (see e.g. \cite{griebelbungartz}). In this case we write $\hat\cX_L=\hat\cX_L^\sigma$.
\end{dfn}

\subsection{Error Analysis of Sparse Grids}
 An extensive error analysis of the standard sparse grid approximation to this problem
 has been shown in \cite{chernovschwabheat} and \cite{chernovschwabstoch} using an
 Aubin-Nitsche based argument.  
As in the error analysis for the full tensor product spaces in Section \ref{sec-fulltp} the main ingredient will
be norm equivalences, shown using wavelet bases.  For sparse grid approximation spaces the appropriate Sobolev spaces are spaces of weak mixed derivatives,
\begin{equation}
H^{r,s}_\text{mix}(\Sigma):= H^r(\Gamma) \otimes H^s(\mathcal{I}).
\end{equation}

For  $u\in H_\text{mix}^{r,s}(\Sigma)$ with 
$u=\sum_{(\ell_x,\ell_t)\geq 0} w_{(\ell_x,\ell_t)}$ and $w_{(\ell_x,\ell_t)}\in W_{\ell_x}\otimes W_{\ell_t}$, we have
 \begin{equation}\label{eq-mix}
  \|u\|_{H^{r,s}_\text{mix}(\Sigma)}^2\sim \sum_{\ell_x,\ell_t}2^{2(r\ell_x+s\ell_t)}\|w_{(\ell_x,\ell_t)}\|^2_{L^2(\Sigma)}.
 \end{equation}

Define the set of indices outside of the sparse grid index set $\hat J_L^\sigma$ as 
 $$I=\{(\ell_x,\ell_t)~:~\ell_x\sigma+\ell_t/\sigma>L\}.$$
 
 Then as in Section \ref{sec-fulltp} we use the norm equivalences (\ref{eq-mix}) to estimate
 \begin{displaymath}
  \begin{split}
   \|\psi-\psi_L\|_{\cH}^2 \leq\sum_{(\ell_x,\ell_t)\notin J_L^\sigma} 2^{-2\max\{\frac{\ell_x}{2},\frac{\ell_t}{4}\}
  -2(\mu\ell_x+\lambda \ell_t)}\|\psi\|^2_{H^{\mu,\lambda}_\text{mix}(\Sigma)}.
  \end{split}
  \end{displaymath}
  
According to \cite{griebelharbrecht} the index set $I$ can be split into two disjoint sets $I=I_1\cup I_2$, given by
 \begin{equation}\begin{split}\label{eq-splitting}
  &I_1 = \{(\ell_x,\ell_t)~:~0\leq \ell_x \leq \frac{L}{\sigma}, ~ L\sigma-\ell_x\sigma^2<\ell_t\}\\
\text{and }
  &I_2 = \{(\ell_x,\ell_t)~:~\frac{L}{\sigma}<\ell_x,~0\leq \ell_t\}.
 \end{split}\end{equation}

   We set $\tilde f(\ell_x,\ell_t):=\max\{\frac{\ell_x}{2},\frac{\ell_t}{4}\}
  +(\mu\ell_x+\lambda\ell_t)$ and apply the splitting (\ref{eq-splitting})
to estimate
   \begin{displaymath}
   \begin{split}
     \|\psi-\psi_L\|_{\cH}^2 
  &\leq
    \sum_{(\ell_x,\ell_t)\in I} 2^{-2\tilde f(\ell_x,\ell_t)}\|\psi\|^2_{H^{\mu,\lambda}_\text{mix}(\Sigma)}\\
   &= \left(\sum_{\ell_x=0}^{\lfloor L/\sigma\rfloor}\sum_{\ell_t=\lfloor L\sigma-\ell_x\sigma^2\rfloor +1}^\infty
   2^{-2\tilde f(\ell_x,\ell_t)}
  +\sum_{\ell_x=\lfloor L/\sigma\rfloor+1}^\infty\sum_{\ell_t=0}^\infty  
  2^{-2\tilde f(\ell_x,\ell_t)} \right)\|\psi\|^2_{H^{\mu,\lambda}_\text{mix}(\Sigma)}
  \end{split}
   \end{displaymath}
   
   The terms of the infinite sums can be rewritten using the following well-known property of
   the geometric series
   $\sum_{k=m}^\infty r^k=\frac{r^{m}}{1-r},~r<1$.  This gives
      \begin{displaymath}
   \begin{split}
     \|\psi-\psi_L\|_{\cH}^2 
  &\lesssim \Big(\sum_{\ell_x=0}^{\lfloor L/\sigma\rfloor } 
                        2^{-2\tilde f(\ell_x,\lfloor L\sigma-\ell_x\sigma^2\rfloor )}
   + \underbrace{\strut 
      2^{-2\tilde f(\lfloor L/\sigma\rfloor,0)}}_{\leq
      2^{-2(\mu+\frac{1}{2})\lfloor L/\sigma\rfloor}} \Big)\|\psi\|^2_{H^{\mu,\lambda}_\text{mix}(\Sigma)}
   \end{split}
   \end{displaymath}
   
To estimate the first summand the exponent $\tilde f(\ell_x,\lfloor L\sigma-\sigma^2\ell_x\rfloor)$ must be minimised
in the range $0\leq \ell_x\leq \lfloor  L/\sigma\rfloor $. This exponent is piecewise linear. Thus, its minima lie either at the limits
of the range $\ell_x=0$ and $\ell_x=\lfloor L/\sigma\rfloor$ or at the turning point of 
$\tilde f(\ell_x,\lfloor L\sigma-\sigma^2\ell_x\rfloor).$

Let $c>0$ be a constant that depends only on $\mu$ and $\lambda$.
To simplify the expression and to determine the turning point we use the estimate $m-1\leq \lfloor m \rfloor$  and get 
$$2^{-2\tilde f(\ell_x,\lfloor L\sigma-\sigma^2\ell_x\rfloor)}\leq 2^{2(\lambda+\frac{1}{4})}2^{-2\tilde f(\ell_x,L\sigma-\sigma^2 \ell_x)}\leq c 2^{-2\tilde f(\ell_x,L\sigma-\sigma^2 \ell_x)}.$$
Then the turning point of $\tilde f(\ell_x, L\sigma-\sigma^2\ell_x)$ lies at $$\ell_x = \frac{L\sigma-\sigma^2\ell_x}{2} 
\Leftrightarrow \ell_x = \frac{L\sigma}{2+\sigma^2}.$$
We examine these three potential minima separately:

\begin{itemize}
 \item $\ell_x=0,\,\ell_t=\lfloor L\sigma\rfloor$ gives the exponent: $\tilde f(0,\lfloor L\sigma\rfloor) = (\lambda+\frac{1}{4})\lfloor L\sigma\rfloor$. 
 In this case we can estimate $$2^{-2\tilde f(0,\lfloor L\sigma\rfloor)} = 2^{-2(\lambda+\frac{1}{4})\lfloor L\sigma\rfloor}\leq c 2^{-2(\lambda+\frac{1}{4}) L\sigma}.$$
 
 \item $\ell_x=\lfloor L/\sigma \rfloor,\ell_t = 0 $ gives the exponent $\tilde f(\lfloor L/\sigma \rfloor, 0) (\mu+\frac{1}{2})\lfloor L/\sigma\rfloor$. 
 In this case we can estimate $$2^{-2\tilde f(\lfloor L/\sigma \rfloor,0)} = 2^{-2(\mu+\frac{1}{2})\lfloor L/\sigma\rfloor}\leq c 2^{-2(\mu+\frac{1}{2}) L/\sigma}.$$
 
 \item $\ell_x=\frac{L\sigma}{2+\sigma^2},\,\ell_t = 2\ell_x$ gives the exponent $\tilde f(\ell_x,2\ell_x)  = \left(\mu+2\lambda+\frac{1}{2}\right)L\frac{\sigma}{2+\sigma^2}$.
 
\end{itemize}

Taking these terms together we get
\begin{equation}\label{eq-gamma}
 \|\psi - \psi_L\|_{\cH}^2 \leq c 2^{-2L\min\left\{\left(\mu+\frac{1}{2}\right)\sigma^{-1},
\left(\lambda+\frac{1}{4}\right)\sigma,\left(2\lambda+\mu+\frac{1}{2}\right)
\left(\frac{\sigma}{2+\sigma^2}\right)\right\}}\|\psi\|_{H^{\mu,\lambda}_\text{mix}(\Sigma)}.
\end{equation}

 Finally, we note that the dimension of the approximation space is given by  (see  \cite{griebelharbrecht}):
 \begin{equation}\label{eq-dof}
  \dim(\hat\cX_L^\sigma) =: N_L\sim\begin{cases}
          2^{L\sigma}L,&\text{ if } \sigma^2=d-1,\\
          2^{L\max\{(d-1)/\sigma,\sigma\}},&\text{ else.}
         \end{cases}
 \end{equation}

 In \cite{griebelharbrecht} the following three reasonable strategies for choosing the scaling factor $\sigma$ are suggested.
 \begin{enumerate}
  \item Equilibrating the degrees of freedom in the spaces $W_{\ell_x}^x\otimes W_{\ell_t}^t$ with 
  $\ell_x\sigma + \ell_t/\sigma = L$ leads to the choice $\sigma^2=d-1$.
  \item Equilibrating the approximation power of the tensor-product spaces leads to the choice $\sigma^2=\frac{\mu}{\lambda}$.
  \item Equilibrating a cost-benefit ratio leads to the choice $\sigma^2=\frac{d-1+\mu}{1+\lambda}$.
 \end{enumerate}
We refer to \cite{griebelharbrecht} for a closer discussion of these choices and note only that
in our implementation each degree of freedom in time and space requires roughly equal computational effort.
We  apply the first strategy, choosing $\sigma^2=d-1$.

\begin{remark}
 In two dimensions and for equal polynomial degrees all three strategies coincide.
\end{remark}

 \begin{thm}\label{thm-sg}
    Let $\psi$ and $\psi_L$ denote the exact solution to (\ref{eq-disform}) and the approximation
  in the discrete space $\cX_L$ with $\sigma^2 = d-1$ respectively.
 Further, let $d>1$ and let $\mu,\lambda$ fulfil $\mu\leq p_x+1$ and $\lambda\leq p_t+1$ and let
   $c> 0$ be a constant depending only on $\mu$ and $\lambda$. 
 Then the convergence in the energy norm is given by
   \begin{displaymath}\begin{split}
  \|\psi - \psi_L\|_{\cH}^2 &\leq c L^{2\gamma} N_L^{-2 \gamma}
  \|\psi\|_{H^{2\lambda+\mu,\lambda+\frac{\mu}{2}}(\Sigma)}^2 ,
\end{split}\end{displaymath}
where  \begin{equation}
\gamma = \min\left\{\frac{\mu+\frac{1}{2}}{\sigma^2},
\lambda+\frac{1}{4}, \frac{2\lambda+\mu+\frac{1}{2}}{2+\sigma^2}\right\}.
\end{equation}
  \end{thm}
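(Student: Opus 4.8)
The plan is to collect together the two ingredients that the excerpt has already essentially assembled: the pointwise error bound \eqref{eq-gamma}, which controls $\|\psi-\psi_L\|_{\cH}^2$ in terms of $2^{-2L\gamma}$ times the mixed Sobolev norm of $\psi$, and the dimension count \eqref{eq-dof} for $N_L = \dim(\hat\cX_L^\sigma)$. With $\sigma^2 = d-1$ we are exactly in the first case of \eqref{eq-dof}, so $N_L \sim 2^{L\sigma}L$, equivalently $2^{L\sigma} \sim N_L/L$. Raising to the power $2\gamma/\sigma$ gives $2^{2L\gamma} \sim (N_L/L)^{2\gamma/\sigma}$, and inverting, $2^{-2L\gamma} \sim L^{2\gamma/\sigma} N_L^{-2\gamma/\sigma}$. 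So the first step is simply to substitute this into \eqref{eq-gamma}.

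The one genuine subtlety is the exponent bookkeeping: the statement claims a rate $N_L^{-2\gamma}$ with $\gamma$ as displayed, but the naive substitution above produces $N_L^{-2\gamma/\sigma}$. The resolution is that the three competing terms in the definition of $\gamma$ in \eqref{eq-gamma} each already carry a factor $\sigma^{-1}$ relative to the $\gamma$ in the theorem statement — compare $(\mu+\tfrac12)\sigma^{-1}$ in \eqref{eq-gamma} against $(\mu+\tfrac12)/\sigma^2$ in the theorem: the theorem's $\gamma$ is precisely $1/\sigma$ times the bracketed minimum appearing in \eqref{eq-gamma}. Hence $2^{-2L\gamma_{\text{theorem}}\cdot\sigma}$ is what \eqref{eq-gamma} delivers, and after the substitution $2^{L\sigma}\sim N_L/L$ the factors of $\sigma$ cancel cleanly, leaving $cL^{2\gamma}N_L^{-2\gamma}$ with the theorem's $\gamma$. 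I would verify this cancellation term by term: for the first term $\frac{\mu+1/2}{\sigma}\cdot\frac{1}{\sigma} = \frac{\mu+1/2}{\sigma^2}$; for the third, $\frac{2\lambda+\mu+1/2}{2+\sigma^2}$ is already in the right form since it does not pick up an extra $\sigma$; and the middle term $\lambda+\tfrac14$ likewise. This is the step I expect to need the most care, since it is the only place the argument is not a mechanical substitution.

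The remaining loose end is the norm on the right-hand side: \eqref{eq-gamma} is stated with $\|\psi\|_{H^{\mu,\lambda}_{\text{mix}}(\Sigma)}$, whereas the theorem writes $\|\psi\|_{H^{2\lambda+\mu,\lambda+\mu/2}(\Sigma)}^2$ (an isotropic anisotropic-Sobolev norm, not a mixed one). To close this gap I would invoke the standard embedding relating mixed-regularity spaces to the anisotropic scale: $H^{2\lambda+\mu,\lambda+\mu/2}(\Sigma) \hookrightarrow H^{\mu,\lambda}_{\text{mix}}(\Sigma)$, which follows from the norm equivalences \eqref{eq-13} and \eqref{eq-mix} by comparing the weights $2^{2\max\{(2\lambda+\mu)\ell_x,(\lambda+\mu/2)\ell_t\}}$ with $2^{2(\mu\ell_x+\lambda\ell_t)}$ and checking the former dominates the latter term-by-term (indeed $\max\{(2\lambda+\mu)\ell_x,(\lambda+\mu/2)\ell_t\}\geq \tfrac12[(2\lambda+\mu)\ell_x + 0] \vee \cdots$, and more directly $(2\lambda+\mu)\ell_x \geq \mu\ell_x$ and $(\lambda+\tfrac{\mu}{2})\ell_t\geq \lambda\ell_t$ while the cross terms are absorbed by the $\max$). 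Absorbing the resulting embedding constant into $c$ then yields exactly the asserted bound, completing the proof.

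Putting it together: start from \eqref{eq-gamma}, rewrite the exponent using $\sigma^2=d-1$ and \eqref{eq-dof} via $2^{L\sigma}\sim N_L/L$, track the factor-of-$\sigma$ normalisation so that the bracketed minimum in \eqref{eq-gamma} becomes $\sigma\gamma$ with $\gamma$ as in the statement, and finally replace the mixed norm by the larger anisotropic norm through the embedding above. No step requires anything beyond the norm equivalences and dimension formula already quoted; the only place to be careful is the exponent arithmetic in the second paragraph.
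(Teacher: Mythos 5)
Your proposal is correct and follows essentially the same route as the paper: substitute the dimension formula \eqref{eq-dof} with $\sigma^2=d-1$ into \eqref{eq-gamma} (including the factor-of-$\sigma$ renormalisation of the exponent, which you track correctly), and then replace the mixed norm by the anisotropic one. The only cosmetic difference is that the paper simply cites the embedding $H^{k,\frac{k}{2}}(\Sigma)\subset H^{a,b}_{\text{mix}}(\Sigma)$ for $k\geq a+2b$ from \cite{chernovschwabheat} with $k=2\lambda+\mu$, $a=\mu$, $b=\lambda$, whereas you re-derive it from the norm equivalences \eqref{eq-13} and \eqref{eq-mix}; for that derivation note that the clean argument is $\mu\ell_x+\lambda\ell_t\leq(\mu+2\lambda)\max\{\ell_x,\ell_t/2\}=\max\{(2\lambda+\mu)\ell_x,(\lambda+\tfrac{\mu}{2})\ell_t\}$, rather than a term-by-term comparison.
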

  
 \begin{proof}
  Applying equation (\ref{eq-dof}) with $\sigma^2 = d-1$ to the estimate (\ref{eq-gamma}) directly gives 
     \begin{displaymath}\begin{split}
  \|\psi - \psi_L\|_{\cH}^2 &\leq c L^{2\gamma} N_L^{-2 \gamma}
  \|\psi\|_{H_\text{mix}^{\mu,\lambda}(\Sigma)}^2 .
\end{split}\end{displaymath}
This gives an estimate in the space $H_\text{mix}^{\mu,\lambda}(\Sigma)$. To get regularity in
the non-mix spaces we simply apply the following result from \cite{chernovschwabheat}.
Let $a,b,k\geq 0$ and $k\geq a+2b$, then
\begin{displaymath}
H^{k,\frac{k}{2}}(\Sigma)\subset H_\text{mix}^{a,b}(\Sigma).
\end{displaymath}
 \end{proof}

The convergence rates for different dimensions and choices of polynomial degrees (up to logarithmic terms)
are summarised in Table \ref{table-1sg} for $d=2$ and
in Table \ref{table-11} for $d=3$. We compare with the results for full tensor product discretisations given in Section \ref{sec-fulltp}.

The table shows that in discretisations with low polynomial degree the sparse grids yield higher rates
than the full tensor products. However, they require higher regularity assumptions on the data.

In the case of piecewise constant basis functions, i.e. $p_x=p_t=0$ and $d=2$, the convergence
rate in the energy norm using these sparse grids is almost twice as high as that of full tensor products.
For $d=3$ the improvements to the convergence rates $\gamma$ are not quite as large as in two dimensions
since we still discretise with a full tensor product in the spatial dimensions.
We note that for $d=3$ these rates give an improvement over
the rates given in \cite{chernovschwabheat}.
\begin{table}[H]
\centering
 \begin{tabular}{c||c|c}
  \hline
   \multicolumn{3}{c}{} \\[-.7em]
  \multicolumn{3}{c}{Full tensor product, $d=2$}\\[.3em]
  \hline
     & & \\[-.7em]
  $(p_x,p_t)$ & conv. rate $\gamma$ & scaling $\sigma^2$ \\[.3em]
  \hline
     & & \\[-.5em]
  $(0,0)$     & $\frac{15}{22}$          & $\frac{6}{5}$\\[.5em]
  $(1,0)$     & $\frac{5}{6}$            & $2$ \\[.5em]
  $(1,1)$     & $\frac{45}{38}$          & $\frac{10}{9}$ \\[.5em]
  $(3,1)$     & $\frac{3}{2}$            & $2$ \\[.5em]
 \end{tabular}\hspace{0.5em}
 \begin{tabular}{c||c|c}
  \hline
  \multicolumn{3}{c}{ } \\[-.7em]
  \multicolumn{3}{c}{Sparse grids, $d=2$}\\[.3em]
  \hline
  & & \\[-.7em]
  $(p_x,p_t)$ & conv. rate $\gamma$  & scaling $\sigma^2$ \\[.3em]
  \hline
  & & \\[-.7em]
  $(0,0)$     &   $\frac{7}{6}$      & $1$          \\[.5em]
  $(1,0)$     &   $\frac{5}{4}$      & $1$          \\[.5em]
  $(1,1)$     &   $\frac{13}{6}$      & $1$          \\[.5em]
  $(3,1)$     &   $\frac{9}{4}$      & $1$          \\[.5em]
 \end{tabular}
\caption{Convergence rates for full and sparse tensor product discretisation in $2$ dimensions.}
\label{table-1sg}
\end{table}
 \begin{table}[H] 
 \centering
  \begin{tabular}{c||c|c}
  \hline
   \multicolumn{3}{c}{} \\[-.7em]
  \multicolumn{3}{c}{Full tensor product, $d=3$}\\[.3em]
  \hline
     & & \\[-.5em]
  $(p_x,p_t)$ & conv. rate $\gamma$ & scaling $\sigma^2$ \\[.3em]
  \hline
     & & \\[-.7em]
  $(0,0)$     & $\frac{15}{32}$                     & $\frac{6}{5}$ \\[.5em]
  $(1,0)$     & $\frac{5}{8}$                       & $2$ \\[.5em]
  $(1,1)$     & $\frac{45}{56}$                     & $\frac{10}{9}$ \\[.5em]
  $(3,1)$     & $\frac{9}{8}$                       & $2$ \\[.5em]
 \end{tabular}\hspace{0.5em}
 \begin{tabular}{c||c|c}
  \hline
  \multicolumn{3}{c}{ } \\[-.7em]
  \multicolumn{3}{c}{Sparse grids, $d=3$}\\[.3em]
  \hline
  & & \\[-.7em]
  $(p_x,p_t)$ & conv. rate $\gamma$ & scaling $\sigma^2$ \\[.3em]
  \hline
  & & \\[-.7em]
  $(0,0)$     &   $\frac{3}{4}$      & $2$          \\[.5em]
  $(1,0)$     &   $\frac{9}{8}$      & $2$          \\[.5em]
  $(1,1)$     &   $\frac{5}{4}$      & $2$          \\[.5em]
  $(3,1)$     &   $\frac{17}{8}$      & $2$          \\[.5em]
 \end{tabular}
\caption{Convergence rates  for full and sparse tensor product discretisation in $3$ dimensions.}
\label{table-11}
\end{table}

\goodbreak

%

\subsection{Combination Technique}
The combination technique for the
solution of sparse grid problems was first introduced in \cite{zengergriebel}.
The basic idea behind the technique is to find a sparse grid approximation using a linear
combination of smaller full grid solutions. The advantage of this is that
 the included full grids are much smaller than the full sparse grid and can
 be computed more quickly, while still giving the same accuracy.
 It also gives an easier implementation since the
 need for the solution in a sparse grid
space is replaced with the solution of several full grids.
Further, the solution of the systems corresponding to these full grids can be performed in parallel,
see e.g. \cite{griebelparallel1} and \cite{griebelparallel2}.

No general proof of convergence for the combination technique exists. However,
it has been shown in \cite{griebelharbrecht2} that it produces the same
order of convergence with the same complexity as the Galerkin approach 
in the standard sparse tensor product case for elliptic operators acting on arbitrary Gelfand triples.
In particular, we can apply \cite[Theorem 2]{griebelharbrecht2} to our problem since $\mathcal{I}$ and
$\Gamma$ are bounded and the bilinear form (\ref{eq-disform}) is continuous and coercive \cite{costabelheat}
and thus guarantee convergence of the combination technique.

\begin{figure}[tb]
 \centering
\includegraphics{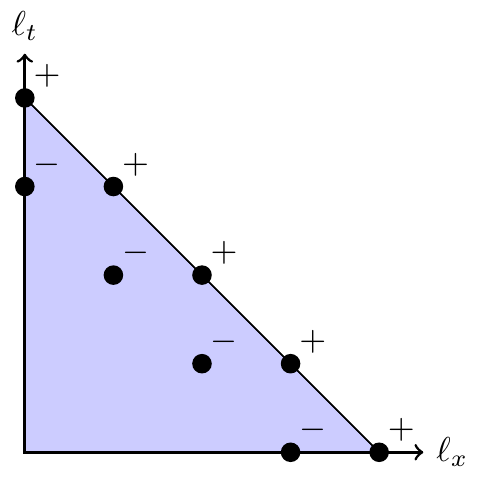}
\caption{The sign contributions of the subspaces used for the 
combination technique for standard sparse grids with $\sigma=1$.}
\label{fig-combinationindex}
\end{figure}

To introduce the combination  technique we use the Galerkin projection onto
the full tensor product discrete spaces.
The Galerkin projection is well-defined due to the coercivity of the single-layer operator $V$.
\begin{dfn}
Let $\Pi_{\cX}$ be a mapping
 \begin{displaymath}
  \Pi_{\cX}\,:\,\cH \rightarrow \cX  ,
 \end{displaymath}
which satisfies Galerkin-orthogonality
\begin{displaymath}
 \langle V(\varphi-\Pi_{\cX}\varphi),v\rangle=0,~~~~ \forall v \in \cX.
\end{displaymath}
We refer to this projection as the Galerkin projection.
For ease of notation we will denote the Galerkin projection onto the space
$\cX_{\ell_x}^x\otimes \cX_{\ell_t}^t$ by $\Pi_{\ell_x,\ell_t}$.
\end{dfn}

Then for isotropic sparse grids we find indices such that
 \begin{equation}\label{eq-combistand}
   \ell_x+\ell_t =  L-l,~l=0,1.
 \end{equation}
 Running over these indices gives the combination technique sparse grid solution $\varphi_L$:
\begin{equation}\label{eq-sgct}
 \varphi_L  = \left(\sum_{l=0}^L\Pi_{l,L-l}\varphi-\sum_{l=0}^L\Pi_{l-1,L-l}\varphi\right)\in \cX_L^\sigma,~\sigma=1. 
\end{equation}
This combination of spaces is shown in Figure \ref{fig-combinationindex}.
Essentially one adds the  spaces
for $l=0$ (denoted by $+$ on the figure) and then subtracts the spaces for $l=1$ 
(denoted by $-$ on the figure).

  If we want to use the combination technique for an anisotropic sparse grid index set, 
  i.e. for a set of the form
 \begin{displaymath}
   \hat I_L^\sigma = \{(\ell_x,\ell_t)~:~\sigma\ell_x+\ell_t/\sigma \leq L\},
 \end{displaymath}
   formula (\ref{eq-combistand}) is changed as follows (see \cite{griebelharbrecht2})
 \begin{displaymath}
   \lceil \sigma^2l_x\rceil+\ell_t = \lceil\sigma L\rceil-l,~l=0,1.
 \end{displaymath}

 \subsubsection{Numerical Experiments}
Next we test the convergence rates derived in Section \ref{sec-sg}, we compare the
convergence rates of the square of the energy norm of the full tensor product discretisation
and the isotropic sparse grid discretisation with a scaling of $\sigma=1$ in both.
As before we chose the time horizon to be $T=4$ and solve the Dirichlet problem (\ref{1-dir})
on a unit circle with the right hand side 
 $g(r,\varphi,t)=t^2\cos(\varphi)$. 

The convergence rate for the full tensor product discretisation, namely $\frac{5}{8}$, is as expected.
The expected convergence rate for the square of the energy norm for the isotropic sparse grid method is $\frac{7}{6}$. The tests
 in Figure \ref{fig-combination2} show a correspondence to the expected rates.

Lastly, we give some numerical results for the combination technique. The left plot in Figure \ref{fig-combination2}
shows convergence of the energy norm against the total number of degrees of freedom.
As expected, the convergence rates are identical to those obtained by implementing the
sparse grid method using a multilevel decomposition. However,  as the right plot in Figure \ref{fig-combination2} shows
the combination technique provides a large improvement in the time taken for the calculation.

\begin{figure}[t] 
 \centering
 \includegraphics{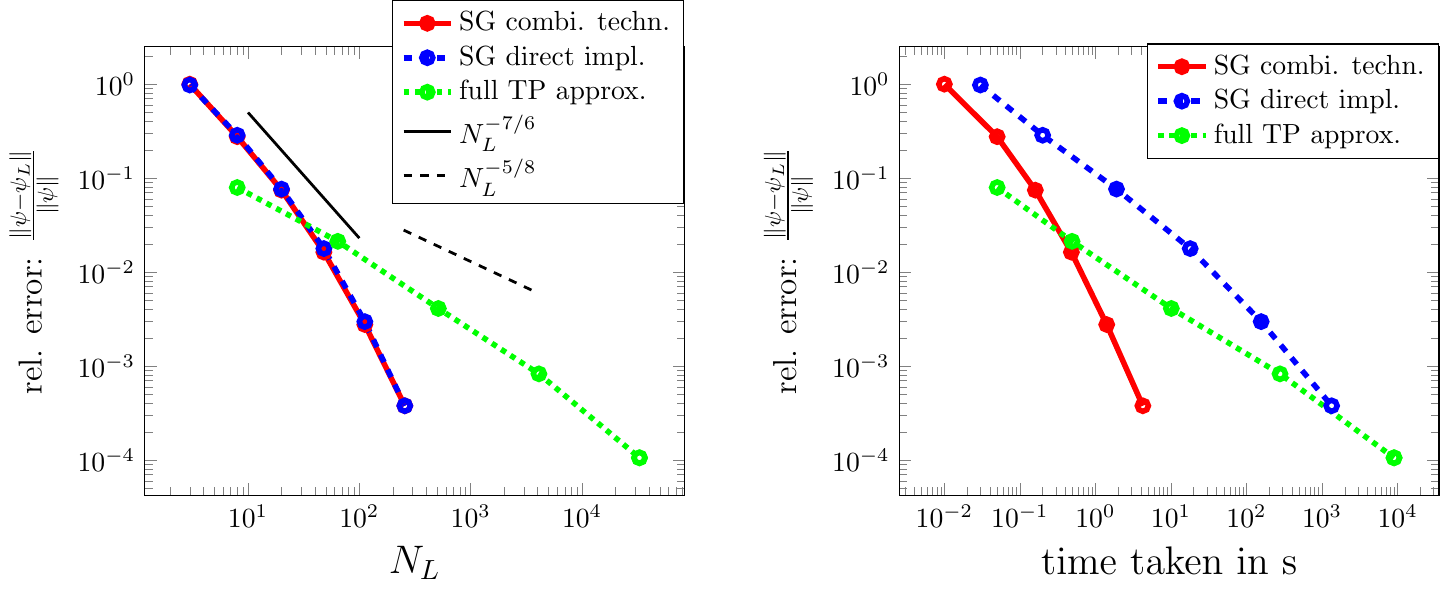}
 \caption{Convergence of the relative error in the energy norm squared versus number of degrees of freedom (left) and time taken in seconds (right) for
the standard sparse grid space with the combination technique and without.}
 \label{fig-combination2}
 \end{figure} 

\subsection{Adaptive Sparse Grids}
  \begin{figure}[t]
  \centering
  \includegraphics{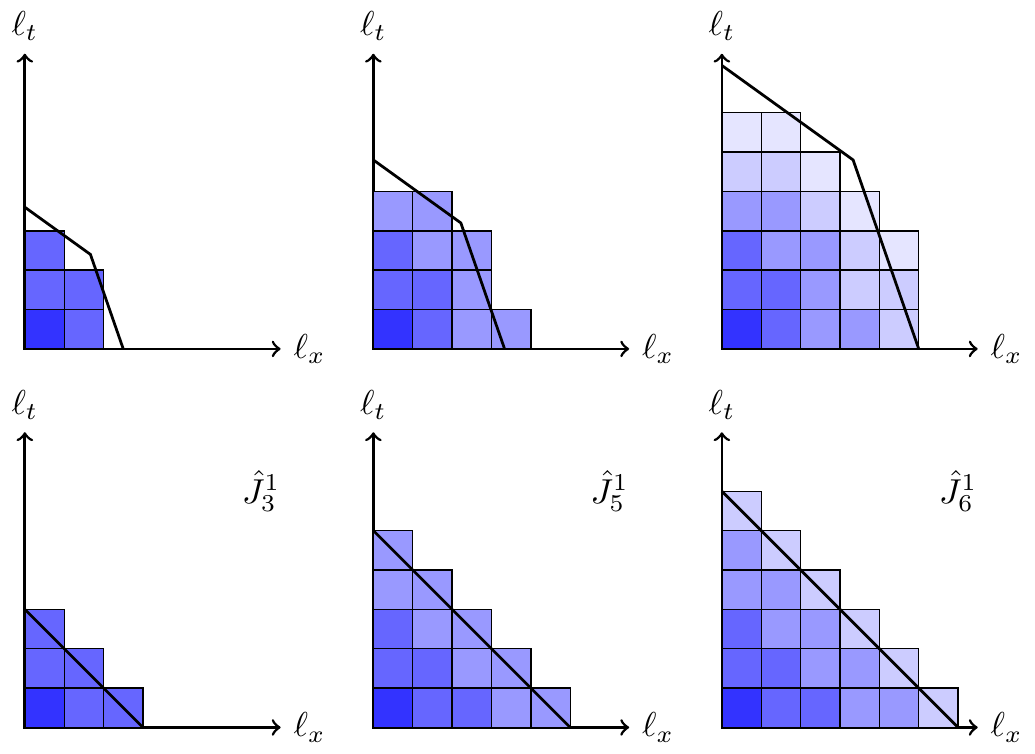}
  \caption{Growth of the adaptive index set shown by squares (top row). Darker squares denote the index set of earlier steps.
  The black line shows the a-priori index set $J_L^{\mathcal{T},\sigma}$.
  This is compared to the growth of the standard anisotropic sparse grid $\hat J_L^\sigma$ (bottom row).}
 \label{fig-adapt}
 \end{figure}

Independently of the choice of approximation space $\hat\cX_L$ we can make the following an Aubin-Nitsche argument.
Let us assume that the solution $\psi$ is in $L^2(\Sigma)$. 
\begin{displaymath}
\begin{split}
 \|\psi-\Pi_{\hat\cX_L} \psi\|_{\cH} 
 & = \sup_{\xi\in H^{\frac{1}{2},\frac{1}{4}}(\Sigma)}\frac{\langle \psi-\Pi_{\hat\cX_L} \psi,\xi\rangle}{\|\xi\|_{ H^{\frac{1}{2},\frac{1}{4}}(\Sigma)}}\\
 &=\sup_{\xi\in  H^{\frac{1}{2},\frac{1}{4}}(\Sigma)} \frac{\langle \psi-\Pi_{\hat\cX_L} \psi,\xi-\Pi_{\hat\cX_L} \xi\rangle}{\|\xi\|_{ H^{\frac{1}{2},\frac{1}{4}}(\Sigma)}}
  \end{split}
 \end{displaymath}
 Then we can estimate
 \begin{displaymath}
\begin{split}
 &\|\psi-\Pi_{\hat\cX_L} \psi\|_{\cH} \leq \|\psi-\Pi_{\hat\cX_L} \psi\|_{L^2(\Sigma)}
 \sup_{\xi\in  H^{\frac{1}{2},\frac{1}{4}}(\Sigma)} \frac{\|\xi-\Pi_{\hat\cX_L} \xi\|_{L^2(\Sigma)}}{\|\xi\|_{ H^{\frac{1}{2},\frac{1}{4}}(\Sigma)}}\\
 &~~~~~\leq \|\psi\|_{H^{s_x,s_t}_\text{mix}(\Sigma)}\underbrace{\frac{\|\psi-\Pi_{\hat\cX_L} \psi\|_{L^2(\Sigma)}}{\|\psi\|_{H^{s_x,s_t}_\text{mix}(\Sigma)}}}
 \underbrace{\sup_{\xi\in \|\psi\|_{H^{\frac{1}{2},\frac{1}{4}}(\Sigma)}}\frac{\|\xi-\Pi_{\hat\cX_L} \xi\|_{L^2(\Sigma)}}{\|\xi\|_{ H^{\frac{1}{2},\frac{1}{4}}(\Sigma)}}}\\
 &~~~~~~~~~~~~~~~~~~~~~~~~~~~~~~\text{small for}~~~~~~~~~~~~~\text{small for full tensor}\\
 &~~~~~~~~~~~~~~~~~~~~~~~~~~~~~~\text{standard}~~~~~~~~~~~~~~~\text{ product grids}\\
 &~~~~~~~~~~~~~~~~~~~~~~~~~~~~~\text{sparse grids}\\
 \end{split}
 \end{displaymath}
 for any approximation space $\hat\cX_L$. This argument leads to the idea of finding 
 a compromise between the full tensor product discretisation and the sparse grid discretisation.
 However, it is unclear what form this index set should take. The optimised index sets given
 in \cite{griebelknapek} are one possibility. However, they still leave us several
 free parameters that need to be chosen.

 In order to get a better idea of how a good index set might look we implemented
 a simple adaptive scheme from \cite{griebelbungartz}. Since our aim was to gain a better understanding of the optimal
 shape of the index set, we have not used an error indicator to choose the next index to include.
 Instead we have simply calculated
 the actual contribution to the energy norm for each index.
 
For an index $(\ell_x,\ell_t)$ the local cost function $c(\ell_x,\ell_t)$ is given by the size of the
corresponding subspace $W_{(\ell_x,\ell_t)}$, i.e.
\begin{displaymath}
 c(\ell_x,\ell_t) = 2^{\ell_t+l_x(d-1)}.
\end{displaymath}
Our local benefit function $b(\ell_x,\ell_t)$ gives the contribution of the index to the energy norm, i.e.
\begin{displaymath}
 b(\ell_x,\ell_t) = \|\psi_{(\ell_x,\ell_t)}\|_{\cH}.
\end{displaymath}
In each step of the adaptive algorithm we choose the next admissible index that maximises
the cost-benefit ratio $c(\ell_x,\ell_t)/b(\ell_x,\ell_t)$.
 We require that an admissible index has no ``holes'', i.e. that all downset neighbours are already contained in the
 index set.
 
 Figure \ref{fig-adapt} shows the growth of the adaptively chosen index set. We notice that these
 index sets behave similarly to the optimised index sets from \cite{griebelknapek}
 for negative choices of $\mathcal{T}$. For our purposes we require an anisotropy 
 in the definition of the index sets.
Care has to be taken when comparing to \cite{griebelknapek}, where this anisotropy is not
present in the definition.
\begin{dfn}
The optimised sparse grid index set is defined as follows
\begin{displaymath}
 J_L^{\cT,\sigma}=\left\{(\ell_x,\ell_t)~:~\sigma\ell_x+l_t/\sigma-\cT\max\{\sigma
\ell_x,\ell_t/\sigma\} \leq (1-\cT)L\right\}.
\end{displaymath}
where $\cT\in [-\infty,1)$and $\sigma$ are free variable .
\end{dfn}
The lines in Figure \ref{fig-adapt} show the boundary of the index set $J_L^{\cT,\sigma}$ for
a scaling of $\frac{6}{5}$ and $\cT = -1$ for comparison.

\subsubsection{Numerical Experiments}
\begin{figure}[t] 
\centering
\includegraphics{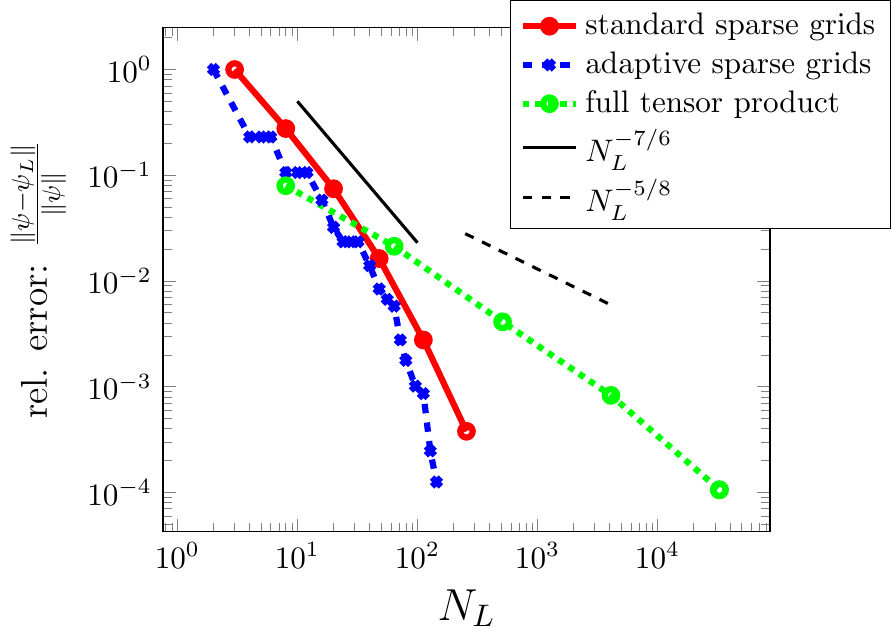}
 \caption{Convergence of the relative error in the energy norm squared versus number of degrees of
 freedom  for the standard sparse grid space and for the adaptively chosen sparse grid space.}
 \label{fig-adapt2}
 \end{figure} 
 
 Next we numerically verify that the adaptively chosen index sets indeed yield an improvement over the standard
 index sets.  In Figure \ref{fig-adapt2} we see that this is the case.
As before we chose the time horizon to be $T=4$ and solve the Dirichlet problem (\ref{1-dir})
on a unit circle with the right hand side 
 $g(r,\varphi,t)=t^2\cos(\varphi)$. For this problem the exact solution is known, so the 
 exact error can be calculated. 
 
 \section{Conclusions}
 The boundary integral formulation efficiently approximates the Cauchy data of the
 given boundary value problem, i.e. the missing boundary values for the Neumann problem
 or the missing boundary fluxes for the Dirichlet problem.
  We have shown an error analysis for two types of
 discretisation spaces. Firstly, we have shown improved error bounds for a full tensor product 
 discretisation.  These improvements are largest for lower
 polynomial degrees, which are primarily of interest in an implementation.
 
 Secondly, we have shown similar improvements for anisotropic sparse grid discretisation
 spaces. In terms of degrees of freedom the sparse grid discretisation
 gives a considerably higher convergence rates than the full tensor product discretisation.
 However,  the sparse grid combination technique is essential to ensure 
 that these gains lead to a corresponding speed-up in run time.
 
 There is scope to explore the  choice of sparse grid index set for this
 problem, as the adaptive algorithm shows that the anisotropic sparse grid index sets
 are not the optimal choice.

\bibliographystyle{elsarticle-num} 
\bibliography{heatBEM}

\begin{thebibliography}{10}
\expandafter\ifx\csname url\endcsname\relax
  \def\url#1{\texttt{#1}}\fi
\expandafter\ifx\csname urlprefix\endcsname\relax\def\urlprefix{URL }\fi
\expandafter\ifx\csname href\endcsname\relax
  \def\href#1#2{#2} \def\path#1{#1}\fi

\bibitem{wayland}
H.~Wayland, Differential equations applied in science and engineering, D. Van
  Nostrand Co., Inc., Princeton, N. J.-Toronto-New York-London, 1957.

\bibitem{wilmott}
P.~Wilmott, S.~Howison, J.~Dewynne, The Mathematics of Financial Derivatives,
  Cambridge University Press, 1995, cambridge Books Online.

\bibitem{witkin}
A.~Witkin, Scale-space filtering: A new approach to multi-scale description,
  in: Acoustics, Speech, and Signal Processing, {IEEE International Conference
  on ICASSP \'84.}, Vol.~9, 1984, pp. 150--153.

\bibitem{thomee}
V.~Thomee, Galerkin finite element methods for parabolic problems, Springer,
  1984.

\bibitem{oeltz}
M.~Griebel, D.~Oeltz, A sparse grid space-time discretization scheme for
  parabolic problems, Computing 81~(1) (2007) 1--34.

\bibitem{costabelheat}
M.~Costabel, Boundary integral operators for the heat equation, Integral
  Equations Operator Theory 13~(4) (1990) 498--552.

\bibitem{chernovschwabheat}
A.~Chernov, C.~Schwab, Sparse space-time {G}alerkin {BEM} for the nonstationary
  heat equation, ZAMM Z. Angew. Math. Mech. 93 (2013) 403--413.

\bibitem{chernovschwabstoch}
A.~Chernov, C.~Schwab, First order $k$-th moment {F}inite {E}lement analysis of
  nonlinear operator equations with stochastic data, Mathematics of Computation
  82 (2013) 1859--1888.

\bibitem{zengergriebel}
M.~Griebel, M.~Schneider, C.~Zenger, A combination technique for the solution
  of sparse grid problems, in: P.~de~Groen, R.~Beauwens (Eds.), {Iterative
  Methods in Linear Algebra}, IMACS, Elsevier, North Holland, 1992, pp.
  263--281.

\bibitem{garckegriebel}
J.~Garcke, M.~Griebel, On the computation of the eigenproblems of hydrogen and
  helium in strong magnetic and electric fields with the sparse grid
  combination technique, Journal of Computational Physics 165~(2) (2000) 694 --
  716.

\bibitem{griebelknapek}
M.~Griebel, S.~Knapek, Optimized general sparse grid approximation spaces for
  operator equations, Math. Comp. 78~(268) (2009) 2223--2257.

\bibitem{noondiss}
P.~Noon, The single layer heat potential and {G}alerkin boundary element
  methods for the heat equation, Phd thesis.

\bibitem{griebelharbrecht}
M.~Griebel, H.~Harbrecht, On the construction of sparse tensor product spaces,
  Mathematics of Computations 82~(282) (2013) 975--994.

\bibitem{owndiss}
A.~Reinarz, Sparse space-time boundary element methods for the heat equation,
  Ph.D. thesis, University of Reading (2015).

\bibitem{zenger}
C.~Zenger, Sparse grids, Notes Numer. Fluid Mech. 31 (1991) 241--251.

\bibitem{griebelbungartz}
H.~Bungartz, M.~Griebel, Sparse grids, Acta Numerica 13 (2004) 1--123.

\bibitem{griebelparallel1}
M.~Griebel, The combination technique for the sparse grid solution of {PDE}s on
  multiprocessor machines, Parallel Processing Letters 2~(1) (1992) 61--70.

\bibitem{griebelparallel2}
M.~Griebel, W.~Huber, T.~St\"ortkuhl, C.~Zenger, On the parallel solution of
  {3D} {PDE}s on a network of workstations and on vector computers, in:
  {Lecture Notes in Computer Science 732, Parallel Computer Architectures:
  Theory, Hardware, Software, Applications}, Springer Verlag, 1993, pp.
  276--291.

\bibitem{griebelharbrecht2}
M.~Griebel, H.~Harbrecht, On the convergence of the combination technique, in:
  Sparse grids and Applications, Vol.~97 of Lecture Notes in Computational
  Science and Engineering, Springer, 2014, pp. 55--74.

\end{thebibliography}

\end{document}